\documentclass[a4paper,12pt,reqno]{amsart}
 
\usepackage[margin=2cm]{geometry} 
\usepackage{amsmath,amsthm,amssymb,enumitem}
\usepackage{amsrefs}
\usepackage[ocgcolorlinks,hyperfootnotes=false,colorlinks=true,citecolor=blue,linkcolor=blue,urlcolor=blue]{hyperref}

\newcommand{\fl}{f{\kern0.075em}l}
\newcommand{\norm}[1]{\left\lVert #1 \right\rVert}

\theoremstyle{plain}
\newtheorem{thm}{Theorem}[section]
\newtheorem{lem}[thm]{Lemma}
\newtheorem{prop}[thm]{Proposition}

\theoremstyle{definition}
\newtheorem{defn}[thm]{Definition}
\newtheorem{exmp}[thm]{Example}

\linespread{1.01}
\setlength{\parindent}{0pt}
\setlength{\parskip}{0.75\baselineskip}
\setlist[enumerate]{itemsep=2mm, topsep=0mm}

\title[Asymptotic behaviour of the Bergman kernel and metric]{Asymptotic behaviour of the Bergman kernel and metric
}
\author{Ravi Shankar Jaiswal}
\address{Centre for Applicable Mathematics, Tata Institute of Fundamental Research, Bangalore 560065, India.}
\email{ravi@tifrbng.res.in}
\date{November 2, 2023}
\subjclass[2020]{Primary 32A36, Secondary 32A25}
\keywords{Bergman kernel, Bergman metric, d-bar problem, Infinite type.}

\begin{document}

\addtolength{\jot}{2mm}
\addtolength{\abovedisplayskip}{1mm}
\addtolength{\belowdisplayskip}{1mm}

\maketitle
\begin{abstract}
    We prove nontangential asymptotic limits of the Bergman kernel on the diagonal, and the Bergman metric and its holomorphic sectional curvature at exponentially \fl{}at
 infinite type boundary points of smooth bounded pseudoconvex
 domains in $\mathbb{C}^{n + 1}$, $n \in \mathbb{N}$. We first show that these objects satisfy appropriate localizations and then use the method of scaling to complete the proof. 
\end{abstract}
\section{Introduction}\label{Intro} 
The primary goal of this article is to describe the asymptotic behaviour of the Bergman kernel on the diagonal, and the Bergman metric and its holomorphic sectional curvature at an infinite type boundary point of a smooth bounded pseudoconvex domain in $\mathbb{C}^{n+1}$, $n \in \mathbb{N}$.
We focus on exponentially flat infinite type boundary points of a smooth bounded pseudoconvex domain. Exponentially flat infinite type points are generalizations of the origin in  $\operatorname{Re}z_1+ e^{-1/|z'|^2} = 0$, $(z_1, z') \in \mathbb{C} \times \mathbb{C}^n$.

Let $D \subset \mathbb{C}^{n + 1}$ be a bounded smooth domain with $0 \in bD$. The boundary point
$0$ is said to be \emph{exponentially flat} if there exists a local defining function of D near the origin of the form
\begin{equation}\label{1}
    \rho(z_1, \dots, z_{n + 1}) = \operatorname{Re}z_1 + \phi \left(|z_2|^2 + \dots + |z_{n + 1}|^2\right),
\end{equation}
 where $\phi : \mathbb{R} \to \mathbb{R}$ is a smooth function satisfying the Definition \ref{exp flat fun}.
\begin{defn}\label{exp flat fun}
    A smooth function $\phi: \mathbb{R} \to \mathbb{R}$ is said to be
    \emph{exponentially flat near the origin}, if it satisfies the following properties:
    \begin{enumerate}
    \item $\phi(x) = 0,$ for $x \leq 0$, 
    \item $\phi^{(n)}(0) = 0$, for $n \in \mathbb{N}$,
    \item $\phi''(x) > 0$, for $x > 0$,
    \item the function $-1/\operatorname{log}\phi(x)$ extends to a smooth function on $[0,\infty)$ and vanishes to a finite order $m$ at $0$, and
    \item \begin{equation}\label{Scaling}
        \lim_{r \to 0^{+}}\frac{\phi(rx)}{\phi(r)} = \begin{cases} 
      0,  &\text{if } 0 < x < 1, \text{ and}\\
      \infty,  & \text{if } x > 1.
      \end{cases}
    \end{equation}
\end{enumerate}
\end{defn}
\begin{exmp}
For $m \in \mathbb{N}$, $\phi(x) = \operatorname{exp}\left({-1}/{x^m}\right)$ is exponentially flat near the origin.
\end{exmp}
We now introduce the necessary definitions and notations that are
needed to state our results. 
We use the following generalized cones for nontangential approach. 
For $\alpha, N > 0$, let
\begin{equation}
    C_{\alpha,N} := \Big\{\big(z_1,z'\big) \in \mathbb{C}\times\mathbb{C}^{n} : \operatorname{Re}z_1 < -\alpha|z'|^N\Big\},
\end{equation}
where $z' = (z_2, \dots, z_{n + 1})$.
An \emph{$(\alpha, N)$-cone type stream} approaching $0 \in bD$ is a smooth curve $q : (0, \epsilon_0) \to D \cap C_{\alpha, N}$ with
\begin{equation*}
    \lim_{t \to 0^+} q(t) = (0, \dots, 0),
\end{equation*}
for some $\epsilon_0 > 0$.
{Let $\xi \in \mathbb{C}^{n + 1}$. We use $\xi_{N,t}$ and $\xi_{T,t}$ to denote the complex normal and complex tangential components of $\xi$ with respect to $\pi(q(t))$, where $\pi\colon \mathbb{C}^n \to bD$ is the normal projection onto $bD$ for points close to $bD$.
So, $\xi = \xi_{N,t} + \xi_{T,t}$ with $\xi_{T,t} \in T_{\pi(q(t))}^{\mathbb{C}}(bD)$ and $\xi_{N,t} \perp T_{\pi(q(t))}^{\mathbb{C}}(bD)$. 
Let $d(t)$ and $d^*(t)$ represent the Euclidean distance of $q(t)$ to $bD$ along the normal and tangential directions.}

Boas, Straube, and Yu \cite{Yu 1995} studied the nontangential asymptotic behaviour of the Bergman kernel, metric and its holomorphic sectional curvature for a wide class of weakly pseudoconvex bounded domains in $\mathbb{C}^{n+1}$ of finite type (in the sense of D'Angelo \cite{D'Angelo 1982}).
In this paper, we extend the results of Boas, Straube, and Yu \cite{Yu 1995} to a class of infinite type pseudoconvex domains in $\mathbb{C}^{n + 1}$. We recall a special case of their result below in Theorem \ref{1.1}.
\begin{thm}\cite{Yu 1995}*{Theorems 1 and 2}\label{1.1}
    Let $D$ be a bounded pseudoconvex domain in $\mathbb{C}^{n + 1}$ with $0 \in bD$. Suppose the local defining function of $D$ near the origin is given by
    \[\rho(z_1, \dots, z_{n + 1}) = \operatorname{Re}z_1 + |z'|^{2m},\]
    where $z' = (z_2, \dots, z_{n + 1}) \in \mathbb{C}^{n}$ and $m \in \mathbb{N}$. Then, for $\xi = (\xi_1, \xi') \in \mathbb{C}^{n + 1} \setminus \{0\}$, there exist constants $C_{1,m}, C_{2,m} > 0$, and $C_{3,m}$ such that
    \begin{enumerate} [labelsep=10mm,leftmargin=20mm,itemsep=5mm]
        \item 
        $\begin{aligned}[t]
             \lim_{t \to 0^+} \frac{{\kappa}_D(-t, 0)}{d(t)^{-2}d^*(t)^{-2n}} = C_{1,m}, 
        \end{aligned}$
    \item 
    $\begin{aligned}[t]
     \lim_{t \to 0^+} \frac{B_D((-t, 0); \xi)}{\sqrt{{|\xi_1|^2}/{d(t)^2} + {|\xi'|^2}/{d^*(t)^2}}} = C_{2, m},   \text{ and} 
    \end{aligned}$
    \item 
    $\begin{aligned}[t]
        \lim_{t \to 0^{+}} H_D((-t, 0); \xi) = C_{3, m}.
    \end{aligned}$
    \end{enumerate}
   Here, $\kappa_D$ denotes the Bergman kernel on the diagonal of $D$, $B_D$ denotes the Bergman metric of $D$, 
    and $H_D$ denotes the holomorphic sectional curvature of the Bergman metric of $D$. In this setting $d(t) = t$ and $d^{*}(t) = t^{1/2m}$.
\end{thm}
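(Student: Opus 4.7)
My plan is to reduce the problem to the corresponding computation on the model domain
\[
\Omega_m := \{(z_1,z') \in \mathbb{C} \times \mathbb{C}^n : \operatorname{Re} z_1 + |z'|^{2m} < 0\}
\]
by combining a Bergman-kernel localization near the boundary point $0$ with the non-isotropic scaling
\[
\Phi_t(z_1, z') := (z_1/t,\, z'/t^{1/2m}),
\]
which preserves $\Omega_m$ and maps $(-t,0)$ to $(-1,0)$.

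First I would establish localization for $\kappa_D$, $B_D$, and $H_D$ near $0 \in bD$. Using H\"ormander's $\bar\partial$-estimates together with plurisubharmonic bumping functions adapted to the finite-type geometry at $0$, one shows that the relevant quantities on $D$ near the origin are asymptotically the same as those of a bounded pseudoconvex domain $D'$ that coincides with $\Omega_m$ on a neighborhood of $0$. This reduces the problem to the asymptotics on $D'$, or equivalently on the rescaled domains $\Phi_t(D')$.

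Next I would exploit the scaling. Under $\Phi_t$ the Bergman kernel transforms by $\kappa_D(p,p) = |\det J\Phi_t(p)|^2\, \kappa_{\Phi_t(D)}(\Phi_t(p),\Phi_t(p))$, and a direct computation gives $|\det J\Phi_t|^2 = t^{-2-n/m} = d(t)^{-2}d^*(t)^{-2n}$; the Bergman metric and its holomorphic sectional curvature transform by pullback and biholomorphic invariance, respectively. Thus the normalizations in the theorem follow once the rescaled objects on $\Phi_t(D')$ converge to their counterparts on $\Omega_m$, evaluated at $(-1,0)$ for the kernel and in the direction $d\Phi_t\xi = (\xi_1/t,\,\xi'/t^{1/2m})$ for the metric and curvature. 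Since $\Phi_t(D')$ exhausts $\Omega_m$ as $t \to 0^+$, this convergence follows from normal-family arguments together with uniform $L^2$ estimates. The rotational symmetries $z_j \mapsto e^{i\theta_j} z_j$ of $\Omega_m$ (and permutations of the $z_j$, $j\ge 2$) diagonalize the Bergman metric tensor at $(-1,0)$, which combined with the shape of $d\Phi_t\xi$ produces the asserted square-root expression. For the holomorphic sectional curvature, the $\mathbb{C}^*$-homogeneity of $H$ in the direction reduces matters to evaluating $H_{\Omega_m}$ at $(-1,0)$ in the limiting projective direction of $d\Phi_t\xi$, which exists as $t \to 0$.

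The main obstacle is to make the scaling convergence rigorous and uniform enough to commute with the extremal problems defining $\kappa$, $B$, and $H$. Concretely one needs: (i) $L^2$-bounds on Bergman-space elements over $\Phi_t(D')$ that are independent of $t$, obtained from solvability of $\bar\partial$ on $\Omega_m$ with suitable weights, and (ii) extraction of weak limits showing that the extremal functions realizing $\kappa$ and $B$ on $\Phi_t(D')$ converge to their analogues on $\Omega_m$. Once these ingredients are in hand, the scaling and symmetries of $\Omega_m$ supply the constants $C_{1,m}, C_{2,m}, C_{3,m}$ in terms of the Bergman geometry of $\Omega_m$ at $(-1,0)$.
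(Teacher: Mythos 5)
The paper does not actually prove Theorem~\ref{1.1} — it is quoted directly from Boas, Straube, and Yu — so the relevant comparison is with the paper's proof of its generalization, Theorem~\ref{1.3}, in Section~\ref{Cone}. Your sketch follows the same three-step architecture that both BSY and the paper use: reduce $\kappa_D$, $B_D$, $H_D$ to the extremal integrals $I_0, I_1, I_2$ (Bergman--Fuchs), localize these near $0$ via H\"ormander's $\bar\partial$-estimates, and then rescale anisotropically so that the localized domains converge to a model. The genuine divergence from the paper is in the choice of model and how the convergence is controlled. You scale to the unbounded Siegel-type domain $\Omega_m = \{\operatorname{Re} z_1 + |z'|^{2m} < 0\}$, which is indeed the right limit in the finite-type setting because $\phi(rx)/\phi(r) \to x^m$. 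The paper, working in the exponentially flat regime where $\phi(rx)/\phi(r)$ tends to $0$ or $\infty$, composes the scaling $\Sigma$ with the Cayley transform $f(z_1,z') = ((1+z_1)/(1-z_1), z')$ so that the scaling limit is the \emph{bounded} product $\mathbb{D}\times B_n(0,1)$ (Lemma~\ref{ScalingLemma2}); this makes the two-sided sandwich $(1-\delta)\mathbb{D}\times B_n(0,1) \subset f\circ\Sigma(D_t^\epsilon) \subset \mathbb{D}\times B_n(0, d_2^\epsilon(t)/d^*(t))$ and the explicit Bergman computations on the limit (Lemma~\ref{Formula for kernel}) very clean. What your route buys is that it works directly with $\Omega_m$ and therefore matches the original BSY constants; what it costs is that the Bergman extremal problems on an unbounded model require extra care. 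In particular, one caution: $\Phi_t(D')$ does \emph{not} exhaust $\Omega_m$ monotonically — it merely agrees with $\Omega_m$ on expanding neighborhoods of $(-1,0)$ while simultaneously acquiring spurious pieces far from the base point. So the phrase ``normal-family arguments together with uniform $L^2$ estimates'' is not enough as stated; you need a two-sided comparison (a lower bound from a large subdomain of $\Omega_m$ contained in $\Phi_t(D')$, and an upper bound from the localization with a controlled error as $t\to 0^+$), which is exactly the structure the paper implements in Lemmas~\ref{localisation lemma 4}--\ref{I_1 2} and is also what BSY make precise in their paper.
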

The main result of this article is the following.
\begin{thm}\label{1.3}
    Let $D \subset \mathbb{C}^{n + 1}$ be a smooth bounded pseudoconvex domain with $0 \in bD$. If $0$ is an exponentially flat boundary point and $q$ is an $(\alpha, N)$-cone type stream approaching $0$, then, for $\xi \in \mathbb{C}^{n + 1} \setminus \{0\}$, 
    \begin{enumerate}[labelsep=10mm,leftmargin=20mm,itemsep=5mm]
    \item $\begin{aligned}[t]
        \lim_{t \to 0^+}\frac{\kappa_D(q(t))}{d(t)^{-2}d^*(t)^{-2n}} = \frac{1}{4\pi\operatorname{vol}(B_n(0,1))},
    \end{aligned}$
        \item $\begin{aligned}[t]
     \lim_{t \to 0^+} \frac{B_D(q(t); \xi)}{\sqrt{{|\xi_{N,t}|^2}/{2d(t)^2} + (n+1) {|\xi_{T,t}|^2}/{d^*(t)^2}}} = 1,   \text{ and} 
    \end{aligned}$
        \item $\begin{aligned}[t]
        \lim_{t \to 0^{+}} H_D(q(t); \xi) \left(\frac{{2|\xi_{N, t}/2d(t)|^4 + (n + 1)|\xi_{T,t}/d^{*}(t)|^4}}{\left(2|\xi_{N, t}/2d(t)|^2 + (n + 1)|\xi_{T, t}/d^*(t)|^2\right)^2}\right)^{-1} = -2.
    \end{aligned}$
    \end{enumerate} 
\end{thm}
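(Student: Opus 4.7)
The plan is to follow the two-step approach announced in the abstract: first localize each of $\kappa_D$, $B_D$, $H_D$ at $q(t)$ to the local model $\Omega = D \cap U$ for a small ball $U$ about $0$, and then exploit scaling property (5) of Definition \ref{exp flat fun} to reduce the computation on $\Omega$ to one on a fixed limit product domain whose Bergman invariants can be written in closed form.

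For the \emph{localization} step, choose $U$ so small that \eqref{1} defines $bD \cap U$, and set $\Omega = D \cap U$. Using Hörmander's $L^2$-estimate for $\bar\partial$ on $D$ with a plurisubharmonic weight concentrated on $D \setminus U$, I would establish $\kappa_D(q(t)) = \kappa_\Omega(q(t))(1 + o(1))$ as $t \to 0^+$, together with the corresponding estimates for the Bergman metric and sectional curvature obtained by differentiating the reproducing-kernel identity in $z, \bar z$ and applying the $L^2(\bar\partial)$-estimate to cutoff-multiplied test functions. The flatness encoded in condition (4) of Definition \ref{exp flat fun}, via the finite vanishing order $m$ of $-1/\log\phi$, provides the quantitative control needed to absorb the localization errors.

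For the \emph{scaling} step, motivated by property (5), introduce the anisotropic dilations
\[
\Lambda_t(z_1, z') = \Bigl(\frac{z_1}{d(t)},\; \frac{z'}{d^*(t)}\Bigr),
\]
where $d^*(t)$ is the tangential scale chosen so that $\phi(d^*(t)^2) = d(t)$. The rescaled defining function
\[
\frac{\rho\bigl(\Lambda_t^{-1}\tilde z\bigr)}{d(t)} = \operatorname{Re}\tilde z_1 + \frac{\phi\bigl(d^*(t)^2\,|\tilde z'|^2\bigr)}{\phi(d^*(t)^2)}
\]
converges, by (5), to $\operatorname{Re}\tilde z_1$ on $|\tilde z'| < 1$ and to $+\infty$ on $|\tilde z'| > 1$, so $\Lambda_t(\Omega)$ converges normally to $P := \{\operatorname{Re}\tilde z_1 < 0\} \times B_n(0,1)$. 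The $(\alpha,N)$-cone condition forces $|q'(t)|/d^*(t) \to 0$, since $|q'(t)| \lesssim d(t)^{1/N}$ is polynomial in $d(t)$ whereas $d^*(t)$ decays only logarithmically by condition (4); after a harmless translation in the imaginary part of the first variable, $\Lambda_t(q(t)) \to (-1, 0) \in P$. Biholomorphic invariance then yields the transformation laws
\[
\kappa_\Omega(q(t)) = d(t)^{-2} d^*(t)^{-2n}\,\kappa_{\Lambda_t(\Omega)}\bigl(\Lambda_t(q(t))\bigr),
\]
and the analogues for $B$ and $H$, reducing the theorem to a Ramadanov-type normal convergence statement: $\kappa_{\Lambda_t(\Omega)} \to \kappa_P$ together with its first and second derivatives, locally uniformly on $P$.

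Finally, on the product $P$ one computes explicitly: $\kappa_P(\tilde z_1, \tilde z') = \kappa_{\{\operatorname{Re}\tilde z_1 < 0\}}(\tilde z_1)\,\kappa_{B_n(0,1)}(\tilde z')$, whose value at $(-1, 0)$ equals $\tfrac{1}{4\pi}\cdot \tfrac{1}{\operatorname{vol}(B_n(0,1))}$, giving (1); the Bergman metric at $(-1, 0)$ is the orthogonal direct sum with half-plane coefficient $1/2$ and ball coefficient $n+1$, which after unscaling via $\Lambda_t^{-1}$ produce the weights $1/(2d(t)^2)$ and $(n+1)/d^*(t)^2$ in (2), the decomposition $\xi = \xi_{N,t} + \xi_{T,t}$ matching exactly the half-plane/ball split; and since both factors have constant holomorphic sectional curvature $-2$ in their Bergman metrics, the standard product formula for the holomorphic sectional curvature of a Kähler product yields (3). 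The main obstacle in the entire argument is the normal convergence of the Bergman kernels on the family $\Lambda_t(\Omega)$: since these domains are unbounded and fail to be uniformly pseudoconvex in any standard sense, one must build uniform plurisubharmonic exhaustions and uniform $L^2(\bar\partial)$-estimates adapted to the scaled geometry, and this is precisely where the finite vanishing order in condition (4) of Definition \ref{exp flat fun} and the cone condition on $q$ enter essentially.
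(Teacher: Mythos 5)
Your high-level plan---localize to a model near $0$, rescale anisotropically, identify the limit as a product domain, and compute there---does match the paper's strategy, but there is a genuine gap at the step you yourself flag as ``the main obstacle'': proving Ramadanov-type normal convergence of the Bergman kernels (and their first two derivatives) on the family $\Lambda_t(\Omega)$. These rescaled domains are unbounded, vary with $t$, and degenerate anisotropically; you give no route to the needed uniform $L^2(\bar\partial)$-estimates, and the paper does not attempt this either. Instead the paper \emph{circumvents} normal convergence entirely: it expresses $\kappa_D$, $B_D$, $H_D$ through the Bergman--Fuchs extremal integrals $I_0, I_1, I_2$ (Proposition \ref{Fuchs}), which are monotone under domain inclusion, and then proves a two-sided \emph{sandwich} (Lemma \ref{ScalingLemma2}): for every $\delta,\epsilon>0$ and small $t$,
\begin{equation*}
(1-\delta)\,\mathbb{D}\times B_n(0,1)\ \subset\ f\circ\Sigma\bigl(D_t^\epsilon\bigr)\ \subset\ \mathbb{D}\times B_n\bigl(0, d_2^\epsilon(t)/d^*(t)\bigr),
\end{equation*}
where $f(z_1,z')=\bigl((1+z_1)/(1-z_1),\,z'\bigr)$ is the Cayley transform. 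Composing with this Cayley map is essential: it replaces your unbounded half-plane factor by the disk, so that \emph{both} bounding domains are bounded balls with explicit extremal integrals, and monotonicity plus Lemma \ref{3.1} (which controls $d_2^\epsilon(t)/d^*(t)$ via the finite vanishing order $m$ from condition (4) of Definition \ref{exp flat fun}) squeeze the ratio of extremal integrals to $1$ as $t\to 0^+$ and $\epsilon\to 0^+$. No normal family argument for Bergman kernels is ever needed.

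Two further issues. First, your localization to $\Omega=D\cap U$ with a single cutoff and a weight ``concentrated on $D\setminus U$'' is not quantitative enough; for an infinite type point the error must be shown to decay faster than the kernel grows. The paper instead works on the transformed domain $D_t=\gamma_t(R^1_t\circ T^1_t(D))$, restricts to the $t$-dependent strip $D_t^\epsilon$, and multiplies by the contracting holomorphic factor $h_\epsilon(z)=\exp\!\bigl(-(-z_1)^{1/(1+\epsilon)}\bigr)$ raised to a power $k\sim d(t)^{-c_\epsilon}$ before applying Hörmander; this is what actually makes the $\bar\partial$-correction negligible (Lemma \ref{localisation lemma 4}). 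Your ``harmless translation'' also omits the two rotations $R^1_t$ (aligning the tangential component of $q(t)$ with $e_2$) and $R^2_t$ (aligning the outer normal at the nearest boundary point $p(t)$ with $e_1$); these are precisely what produce the decomposition $\xi=\xi_{N,t}+\xi_{T,t}$ that appears in statements (2)--(3), and they cannot be suppressed. Second, the claim that both factors of the product ``have constant holomorphic sectional curvature $-2$'' is incorrect in the normalization the paper uses: from Lemma \ref{Formula for kernel}, $H_{\mathbb{D}\times B_n(0,1)}(0;(\xi_1,0))=-1$ and $H_{\mathbb{D}\times B_n(0,1)}(0;(0,\xi'))=-2/(n+1)$, and statement (3) is exactly the product interpolation between these values. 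The conclusion (3) is what one gets by substituting the explicit scaled vector $(f\circ\Sigma)'(-d(t),0)\,R^2_t\circ R^1_t(\xi)$ into the closed-form expression of Lemma \ref{Formula for kernel}, not from a ``constant curvature $-2$ on each factor'' shortcut.
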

Some of our results above, in $n = 1$, are stated in Kim and Lee \cite{Sunhong 2002}. However, \cite{Sunhong 2002}*{Proposition 2}
is not true and it is used crucially in their proofs. We explicitly highlight this error in Section \ref{counterexample}. Our results provide an alternate proof for some of the results in Kim and Lee \cite{Sunhong 2002}.

The asymptotic behaviour of the Bergman kernel and metric has been the subject of profound interest and investigation in complex analysis.
Bergman \cites{Bergman 1970, Stefan 1933} studied the boundary behaviour of the Bergman kernel on certain special classes of domains in $\mathbb{C}^2$ (strongly pseudoconvex domains, domains with certain star symmetries, etc).
Hörmander \cite{Hormander 1965} extended the analysis to bounded strongly pseudoconvex domains in $\mathbb{C}^{n+1}$ (see \cite{Hormander 1965}*{Theorem 3.5.1}).
Kamimoto \cite{Kamimoto} has described the asymptotic behaviour of the Bergman kernel at the boundary for some pseudoconvex model domains (including not only finite type cases but also some infinite type cases) using
the geometrical information of the Newton polyhedron of the
defining function.

Diederich's \cites{Diederich 1970, Diederich 1973} contributions were focused on the asymptotic behaviour of the Bergman metric for strongly pseudoconvex domains in $\mathbb{C}^{n+1}$.
Klembeck \cite{Klembeck 1978} leveraged asymptotic results for the Bergman kernel function of Fefferman \cite{Fefferman 1974} to derive the asymptotic behaviour of the holomorphic sectional curvature of the Bergman metric in strongly pseudoconvex domains. 

There has been significant progress in providing lower and upper bounds of the Bergman kernel and metric on weakly pseudoconvex domains of finite type and some classes of infinite type domains.
For results of this nature, see
Catlin \cite{Catlin}, Nagel-Rosay-Stein-Wainger \cite{Nagel}, McNeal \cites{McNeal 1989, McNeal 1992, McNeal 1994}, Herbort \cites{Herbort 1992, Herbort 1993}, Diedrich \cites{Diedrich 1993, Diedrich 1994}, Bharali \cites{Bharali 2010, Bharali 2020}.

The article is organized as follows. Section \ref{Prelim} is devoted to definitions, known results, and preliminary findings essential for proving our main results. 
In Section \ref{Cone}, we provide the proofs of our main results.
Finally, in Section \ref{counterexample}, we address the errors in Kim and Lee {\cite{Sunhong 2002}}.
\section{Preliminaries}\label{Prelim}
Let $D \subset \mathbb{C}^{n + 1}$ be a bounded domain. The Bergman space of $D$, denoted by $A^2(D)$, consists of holomorphic functions $f:D\to\mathbb{C}$ such that $\int_D|f|^2\,\mathrm{d}V<\infty$. Since $A^2(D)$ is a closed subspace of $L^2(D)$, there exists an orthogonal projection of $L^2(D)$ onto $A^2(D)$, called the Bergman projection of $D$. The Bergman projection, denoted by $P_{D}$, is given by an integral kernel, called the Bergman kernel and denoted by $K_D: D\times D\to\mathbb{C}$, i.e.
\begin{equation}
    P_{D}f(z) = \int_{D}K_{D}(z, w)f(w) \, \mathrm{d}V(w) \quad\text{for all } f \in L^2(D), z \in D.
\end{equation}

The Bergman kernel satisfies the following properties: 
\begin{enumerate}
    \item For a fixed $w\in D$, $K_D(\cdot, w)\in A^2(D)$,
    \item $\overline{K_D(z, w)} = K_D(w,z)$ for all $z,w\in D$,and
    \item $f(z)=\int_D K_D(z, w)f(w)\,\mathrm{d}V(w)$ for all $f\in A^2(D)$ and $z\in D$.
\end{enumerate} 

The Bergman kernel on the diagonal of $D$ is denoted by $\kappa_D(z) = K_D(z,z)$. Moreover, we can define the Bergman metric of $D$ as 
\begin{equation*} 
B_D(z;\xi)=\sqrt{\sum_{i,j=1}^{n+1}g_{i\bar{j}}(z)\xi_i\overline{\xi}_j},
\end{equation*} where 
$g_{i\bar{j}}(z) = \partial^2_{z_i\bar{z}_j} \operatorname{log} \kappa_D(z)$, $z \in D$, and $\xi = (\xi_1, \xi_2, \dots, \xi_{n+1}) \in \mathbb{C}^{n+1}$.

The holomorphic sectional curvature for the Bergman metric is given by
\begin{equation*}
    H_{D}(z; \xi) = \frac{\sum_{h,j,k,l}R_{\bar{h}jk\bar{l}}(z)\bar{\xi}_h \xi_j \xi_k \bar{\xi}_l}{B_D(z;\xi)^4}, \, z \in D, \, \text{and } \xi \in \mathbb{C}^{n + 1} \setminus \{0\}.
\end{equation*}
In this equation, $R_{\bar{h}jk\bar{l}}(z) = -\partial^2_{z_k\bar{z}_l}g_{j\bar{h}}(z) + \sum_{\mu, \nu} g^{\nu \bar{\mu}}(z) \partial_{z_k}g_{j\bar{\mu}}(z) \partial_{\bar{z}_l}g_{\nu \bar{h}}(z) ,$ where $[g^{\nu \bar{\mu}}(z)]$ is the inverse matrix of $[g_{j \bar{k}}(z)]$.

We define the following extremal integrals that are useful in computing and estimating $k_D, \, B_D$, and $H_D$. Let $z \in D$, $\xi \in \mathbb{C}^{n + 1} \setminus \{0\}$. Define
\begin{align}
    I_0^D(z) &= \operatorname{inf}\left\{\int_D|f|^2 \, \mathrm{d}V : f \in A^2(D), f(z) = 1\right\},\\
    I_1^D(z, \xi) &= \operatorname{inf}\left\{\int_D|f|^2 \, \mathrm{d}V: f \in A^2(D), f(z) = 0, \sum_{j= 1}^{n+1}\xi_j\frac{\partial f}{\partial z_j}(z) = 1\right\} \text{, and}\\
    I_2^D(z, \xi) &= \operatorname{inf}\Bigg\{\int_D|f|^2 \, \mathrm{d}V: f \in A^2(D),  f(z) = \frac{\partial f}{\partial z_j}(z) = 0, j = 1, \dots, n+1, \nonumber\\[-2mm]
    & \hspace{80mm} \text{and}\sum_{k, l= 1}^{n+1}\xi_k \xi_l\frac{\partial^2 f}{\partial z_k \partial z_l}(z) = 1\Bigg\}.
\end{align}
It is easy to see from the definitions that if $D'$ is a subdomain of $D$ and $z \in D'$, then
\[I_0^{D'}(z) \leq I_0^{D}(z), \quad I_1^{D'}(z, \xi) \leq I_1^{D}(z, \xi), \text{ and}\quad I_2^{D'}(z; \xi, \eta) \leq I_2^D(z; \xi, \eta).\]

If $f: D_1 \to D_2$ is a biholomorphism. Then the following transformation formulae hold.
\begin{align}
        I_0^{D_1}(z)|\operatorname{det}J_{\mathbb{C}}f(z)|^2 &= I_0^{D_2}(f(z)),\\
        I_1^{D_1}(z, \xi)|\operatorname{det}J_{\mathbb{C}}f(z)|^2 &= I_1^{D_2}(f(z), J_{\mathbb{C}}f(z) \xi), \text{ and}\\
        I_2^{D_1}(z , \xi)|\operatorname{det}J_{\mathbb{C}}f(z)|^2 &= I_2^{D_2}(f(z), J_{\mathbb{C}}f(z) \xi),
\end{align}
where $J_{\mathbb{C}}f(p)$ denotes the complex Jacobian matrix of $f$ at $p$. 

The following proposition gives a useful representation of the Bergman kernel on the diagonal, and the Bergman metric and its holomorphic sectional curvature in terms of the above extremal integrals.
\begin{prop}\label{Fuchs}(Bergman-Fuchs \cite{Stefan 1933})
    Let $D \subset \mathbb{C}^{n + 1}$ be a domain. Then, for $z \in D$
    and $\xi \in \mathbb{C}^{n + 1} \setminus \{0\}$
\begin{align*}
        \kappa_D(z) &= \frac{1}{I_0^D(z)}, \quad
        B_D^2(z; \xi) = \frac{I_0^D(z)}{I_1^D(z; \xi)}, \text{ and } \quad H_D(z; \xi) = 2 - \frac{\left(I_1^D(z, \xi)\right)^2}{I_0^D(z) \cdot I_2^D(z, \xi)}.  
\end{align*}
We will need the following formulae for the Bergman kernel, the Bergman metric, and its holomorphic sectional curvature to prove our results.
\begin{lem}\label{Formula for kernel} The Bergman kernel, and the Bergman metric and its holomorphic sectional curvature at $ 0 \in \mathbb{D} \times B_n(0, 1)$ are as follows.
\begin{align*}
    \kappa_{\mathbb{D} \times B_n(0, 1)}(0) &= \frac{n!}{\pi^{n + 1}}, \\
    B_{\mathbb{D} \times B_n(0, 1)}(0; \xi) &= \sqrt{2|\xi_1|^2 + (n + 1)|\xi'|^2}, \, \text{and} \\ H_{\mathbb{D} \times B_n(0, 1)}(0; \xi) &= 
 -\frac{2\left(2|\xi_1|^4 + (n + 1)|\xi'|^4\right)}{\left(2|\xi_1|^2 + (n + 1)|\xi'|^2\right)^2},
\end{align*}
for $\xi \in \mathbb{C}^{n + 1} \setminus \{0\}$.
\end{lem}
\begin{proof}
    From \cite{Jarnicki}*{Example $6.1.5$ and Theorem $6.1.11$}, we get
    \begin{align*}
   K_{\mathbb{D} \times B_n(0, 1)}((z_1, z'),(w_1, w')) &= K_{\mathbb{D}}(z_1, w_1) \cdot K_{B_n(0, 1)}(z', w')\\
   &= \frac{n!}{\pi^{n + 1}}(1 - z_1\bar{w}_1)^{-2}\left(1 - \sum_{j = 2}^{n + 1}z_j\bar{w}_j\right)^{-(n + 1)},     \end{align*}
    for all $z_1, w_1 \in \mathbb{D}$ and $z', w' \in B_n(0, 1)$. 
    
    Using the
    above formula for the Bergman kernel, we obtain the Bergman metric for the product domain $\mathbb{D} \times B_n(0,1)$.
    \begin{equation*}
        g_{j\bar{k}}(z_1, z') = \begin{cases} 
      \frac{2}{\left(1 - |z|^2\right)^2}, &\text{if } j = k = 1,\\[2mm]
      \frac{n  + 1}{\left(1 - |z'|^2\right)^2}\left[\left(1 - |z'|^2\right)\delta_{jk} + \bar{z}_j z_k\right],&\text{if } j, k \in \{2, \dots, n + 1\},\\[2mm]
      0, &\text{if } j = 1, k \in \{2, \dots, n + 1\}, \text{ and}\\[2mm]
      0, &\text{if } k = 1, j \in \{2, \dots, n + 1\}.
   \end{cases}
    \end{equation*}
    Therefore,
    \begin{equation*}
        B_{\mathbb{D} \times B_n(0, 1)}(0; \xi) = \sqrt{2|\xi_1|^2 + (n + 1)|\xi'|^2},
    \end{equation*}
    for $\xi \in \mathbb{C}^{n + 1}$.
    
    To compute the holomorphic sectional curvature at the origin for the Bergman metric of $\mathbb{D} \times B_n(0, 1)$,
    we consider
    \begin{align*}
    \sum_{h, j,k,l}R_{\bar{h}jk\bar{l}}(0)\bar{\xi}_h\xi_j\xi_k\bar{\xi}_l &= \sum_{h,j,k,l}\left(- \frac{\partial^2 g_{j\bar{h}}}{\partial z_k \partial \bar{z}_l}(0) + \sum_{\mu, \nu} g^{\nu \bar{\mu}}(0) \frac{\partial g_{j \bar{\mu}}}{\partial z_k}(0) \frac{\partial g_{\nu \bar{h}}}{\partial \bar{z}_l}(0)\right)\bar{\xi}_h\xi_j\xi_k\bar{\xi}_l\\
    & = - 2\left(2|\xi_1|^4 + (n + 1)|\xi'|^4\right).
    \end{align*}
    Hence,
    \begin{equation*}
        H_{\mathbb{D} \times B_n(0, 1)}(0; \xi) = -\frac{2\left(2|\xi_1|^4 + (n + 1)|\xi'|^4\right)}{\left(2|\xi_1|^2 + (n + 1)|\xi'|^2\right)^2},
    \end{equation*}
    for $\xi \in \mathbb{C}^{n + 1} \setminus \{0\}$.
\end{proof}
\end{prop}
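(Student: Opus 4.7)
My approach is to exploit the product structure of $\mathbb{D}\times B_n(0,1)$. For a product of bounded domains the Bergman kernel factorizes, and the well-known formulas $K_\mathbb{D}(z,w) = [\pi(1-z\bar w)^2]^{-1}$ for the disk and $K_{B_n(0,1)}(z',w') = n!\pi^{-n}(1 - \langle z',w'\rangle)^{-(n+1)}$ for the unit ball give $\kappa_{\mathbb{D}\times B_n(0,1)}(0) = n!/\pi^{n+1}$ on the spot, establishing the first assertion.

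For the metric I plan to compute $g_{j\bar k} = \partial^2_{z_j\bar z_k}\log\kappa$ directly from the product form $\log\kappa(z_1,z') = \log\kappa_\mathbb{D}(z_1) + \log\kappa_{B_n(0,1)}(z')$. Because the two summands depend on disjoint variable groups, the metric is block-diagonal for all $z$, and in particular at the origin the disk block contributes $g_{1\bar 1}(0) = 2$ while the ball block contributes $g_{j\bar k}(0) = (n+1)\delta_{jk}$ for $j,k\in\{2,\dots,n+1\}$. Plugging these into the defining quadratic form of $B_D$ yields the second assertion. The explicit matrix expression in the lemma statement is then just the standard differentiation of $-2\log(1-|z_1|^2)$ together with $-(n+1)\log(1-|z'|^2)$.

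For the holomorphic sectional curvature, the key observation is that the product form of $\log \kappa$ forces all first and second mixed derivatives of $g_{j\bar h}$ at the origin to vanish whenever the four indices $h,j,k,l$ do not all lie in the same block. Combined with $[g^{\nu\bar\mu}(0)]$ being block-diagonal, the Bergman curvature tensor at $0$ then splits as the direct sum of the disk and ball curvatures: $R^{\mathbb{D}\times B_n}_{\bar h j k \bar l}(0)$ is nonzero only when $(h,j,k,l) = (1,1,1,1)$ or when all four indices lie in $\{2,\dots,n+1\}$. So I simply compute each factor separately and add.

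For the disk factor, a short computation using $g^\mathbb{D}_{1\bar 1}(z_1) = 2(1-|z_1|^2)^{-2}$ gives $R^\mathbb{D}_{\bar 1 1 1 \bar 1}(0) = -4$, contributing $-4|\xi_1|^4$ to the numerator of $H$. For the ball factor, expanding $g^{B_n}_{j\bar h}$ to second order in $z'$ yields $\partial^2_{z_k\bar z_l} g^{B_n}_{j\bar h}(0) = (n+1)(\delta_{jh}\delta_{kl} + \delta_{jl}\delta_{kh})$, which after contracting against $\bar\xi_h\xi_j\xi_k\bar\xi_l$ produces a ball contribution of $-2(n+1)|\xi'|^4$. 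Dividing the total $-2(2|\xi_1|^4 + (n+1)|\xi'|^4)$ by $B^4(0;\xi) = (2|\xi_1|^2 + (n+1)|\xi'|^2)^2$ gives the third assertion. I expect the most error-prone step to be verifying the vanishing of the cross-block derivative terms in the quadratic piece of $R$; once this bookkeeping is done, the remaining computation is entirely mechanical.
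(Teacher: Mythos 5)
Your proposal is correct and follows essentially the same route as the paper: starting from the product formula for the Bergman kernel, differentiating $\log\kappa$ to get the block-diagonal metric, and then computing the curvature tensor directly, noting that the first-derivative terms $\partial_{z_k}g_{j\bar\mu}$ vanish at the origin so only $-\partial^2_{z_k\bar z_l}g_{j\bar h}(0)$ contributes. The explicit block-diagonality observation you use to justify the splitting of $R$ into disk and ball pieces is implicit in the paper's direct evaluation of the contracted sum, but your computations ($R^{\mathbb{D}}_{\bar1 1 1\bar1}(0)=-4$, $\partial^2_{z_k\bar z_l}g^{B_n}_{j\bar h}(0)=(n+1)(\delta_{jh}\delta_{kl}+\delta_{jl}\delta_{kh})$, giving $-2(n+1)|\xi'|^4$) all check out and lead to the same result.
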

We will prove localization for the extremal integrals (Lemma \ref{localisation lemma 4}) by utilizing the following theorem. To state the theorem, we need the following definition. For a plurisubharmonic function $\phi$ on $D$, let
\[L^2_{(0, 1)}(D, \phi) := \left\{g = \sum_{j = 1}^{n}g_j \mathrm{d}\bar{z}_j : \int_{\Omega} |g|^2\operatorname{e}^{-\phi} \, \mathrm{d}V = \int_{\Omega} \left(\sum_{j = 1}^n |g_j|^2\right) \operatorname{e}^{-\phi} \, \mathrm{d}V < \infty \right\}.\]
\begin{thm}(Hörmander \cite{Hörmander})\label{Hormander} Let $D$ be a pseudoconvex open set in $\mathbb{C}^n$ and $\phi$ any plurisubharmonic function in $D$. For every $g \in L^2_{(0, 1)}(D, \phi)$ with $\overline{\partial}g = 0$, there is a solution $u \in L^2_{\operatorname{loc}}(D)$ of the equation $\overline{\partial}u = g$ such that
\begin{equation*}
    \int_{D} \frac{|u|^2e^{-\phi}}{\left(1 + |z|^2\right)^{2}}\, \mathrm{d}V \leq \int_D|g|^2e^{-\phi} \, \mathrm{d}V.
\end{equation*}
\end{thm}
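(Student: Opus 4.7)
The plan is to follow Hörmander's classical $L^2$ method and reduce the problem to a weighted a priori estimate for the $\bar\partial$-operator combined with a Hahn--Banach/Riesz duality argument. The appearance of the factor $(1+|z|^2)^{-2}$ on the left-hand side suggests introducing three weights differing by multiples of $\log(1+|z|^2)$: set $\phi_1 = \phi - 2\log(1+|z|^2)$, $\phi_2 = \phi$, and $\phi_3 = \phi + 2\log(1+|z|^2)$, and consider the densely defined closed operators
\begin{equation*}
T\colon L^2(D,\phi_1) \longrightarrow L^2_{(0,1)}(D,\phi_2), \qquad S\colon L^2_{(0,1)}(D,\phi_2) \longrightarrow L^2_{(0,2)}(D,\phi_3),
\end{equation*}
both induced by $\bar\partial$, so that $ST = 0$. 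The idea is that $\log(1+|z|^2)$ is strictly plurisubharmonic with Hessian equal to the Fubini--Study form, and this strict positivity will generate precisely the $(1+|z|^2)^{-2}$ weight in the final estimate without requiring $\phi$ itself to have any positivity.

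First I would reduce to the case in which $D$ is smoothly bounded strongly pseudoconvex and $\phi$ is smooth up to $\overline D$, by exhausting a general pseudoconvex $D$ by an increasing sequence of such subdomains $D_j \Subset D$ and regularizing $\phi$ via convolution plus a small strictly plurisubharmonic correction. Uniform bounds on each $D_j$ will let one pass to the weak limit at the end. On each $D_j$ the central step is the Bochner--Kodaira--Morrey--Kohn identity: for smooth compactly supported $(0,1)$-forms $v = \sum v_k\, d\bar z_k$ in $\mathrm{Dom}(T^*)\cap\mathrm{Dom}(S)$, integration by parts gives
\begin{equation*}
\|T^*v\|_{\phi_1}^2 + \|Sv\|_{\phi_3}^2 \geq \sum_{j,k} \int_D \frac{\partial^2\phi_2}{\partial z_j\partial\bar z_k}\, v_j \overline{v_k}\, e^{-\phi_2}\, dV + \text{(nonnegative boundary term)},
\end{equation*}
where pseudoconvexity of $bD_j$ makes the boundary integral nonnegative. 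Writing $\phi_2 = \phi_1 + 2\log(1+|z|^2)$ and discarding the plurisubharmonic contribution of $\phi_1$, the Fubini--Study Hessian of $2\log(1+|z|^2)$ dominates $2(1+|z|^2)^{-2}\sum_k|v_k|^2$ pointwise, which yields the basic a priori estimate
\begin{equation*}
\int_D \frac{|v|^2}{(1+|z|^2)^2}\, e^{-\phi}\, dV \leq \|T^*v\|_{\phi_1}^2 + \|Sv\|_{\phi_3}^2
\end{equation*}
on $\mathrm{Dom}(T^*)\cap\mathrm{Dom}(S)$, after a density argument extending from compactly supported forms to all admissible $v$.

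Given the target form $g$ with $\bar\partial g = 0$ (so $g \in \ker S$), the next step is to define the linear functional $\Lambda(T^*v) := \langle g,v\rangle_{\phi_2}$ on the range $\mathrm{Ran}(T^*) \subset L^2(D,\phi_1)$. Splitting an arbitrary $v \in \mathrm{Dom}(T^*)$ orthogonally as $v = v_1 + v_2$ with $v_1 \in \overline{\mathrm{Ran}(T)} \subset \ker S$ and $v_2 \in (\ker S)^{\perp}$, so that $T^*v_2 = 0$, the Cauchy--Schwarz inequality together with the a priori estimate and the hypothesis $\int |g|^2 e^{-\phi}\,dV < \infty$ shows $|\Lambda(T^*v)| \leq \|g\|_{\phi_2}\cdot \|T^*v\|_{\phi_1}$, after absorbing the $(1+|z|^2)^{-2}$ weight using the assumption on $g$. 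Hence $\Lambda$ extends to a continuous functional on $L^2(D,\phi_1)$ and the Riesz representation theorem produces $u \in L^2(D,\phi_1)$ with $\langle u, T^*v\rangle_{\phi_1} = \langle g, v\rangle_{\phi_2}$ for all $v$, i.e.\ $Tu = g$, i.e.\ $\bar\partial u = g$, and $\|u\|_{\phi_1}^2 \leq \|g\|_{\phi_2}^2$, which is exactly the advertised inequality.

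The main obstacle is the a priori estimate: proving the Bochner--Kodaira identity cleanly for forms merely in $\mathrm{Dom}(T^*)\cap\mathrm{Dom}(S)$ (not a priori smooth up to the boundary) requires a careful Friedrichs-type density argument, and one has to check that the boundary term really has the sign given by pseudoconvexity. The subsequent passage from bounded smoothly strongly pseudoconvex subdomains $D_j$ to the general pseudoconvex $D$ is routine once the uniform estimate is in hand, since weak compactness in $L^2$ with the weight $(1+|z|^2)^{-2}e^{-\phi}$ delivers a limit solution that inherits the bound by lower semicontinuity of the norm.
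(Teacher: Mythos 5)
The paper gives no proof of this statement; it is quoted from H\"ormander's book \cite{Hörmander} as a black box for the localisation argument, so there is no internal proof to compare against. Your proposal is, in outline, a sketch of the textbook argument, but it contains a fatal sign error in the placement of the auxiliary weight $\log(1+|z|^2)$, and the error propagates into both the a priori estimate and the final bound.

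You set $\phi_1 = \phi - 2\log(1+|z|^2)$ and $\phi_2 = \phi$, and at the curvature step you write $\phi_2 = \phi_1 + 2\log(1+|z|^2)$ and propose to ``discard the plurisubharmonic contribution of $\phi_1$'' to conclude that the complex Hessian of $\phi_2$ dominates the Fubini--Study form. But $\phi_1$ is \emph{not} plurisubharmonic in general: for $\phi\equiv 0$ it equals $-2\log(1+|z|^2)$, which is strictly plurisuperharmonic. There is no nonnegative term to discard; the complex Hessian of $\phi_2=\phi$ is merely $\geq 0$, and the claimed a priori estimate with the $(1+|z|^2)^{-2}$ gain does not follow. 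A symptom of the same error appears at the end: you identify $\|u\|_{\phi_1}^2 \leq \|g\|_{\phi_2}^2$ with ``exactly the advertised inequality,'' but $\|u\|_{\phi_1}^2 = \int_D |u|^2(1+|z|^2)^{2} e^{-\phi}\,\mathrm{d}V$, which differs from the left-hand side of the theorem by a factor $(1+|z|^2)^4$ inside the integral --- a much stronger, and in fact false, bound (already for $D=\mathbb{C}$, $\phi=0$). The repair is the opposite sign: run the argument with the \emph{single} strictly plurisubharmonic weight $\tilde\phi = \phi + 2\log(1+|z|^2)$, whose complex Hessian is bounded below by $2(1+|z|^2)^{-2}\,\mathrm{Id}$ because now it is the plurisubharmonic $\phi$ that may be discarded. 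Estimate $|\langle g,v\rangle_{\tilde\phi}|$ by Cauchy--Schwarz after splitting the integrand as $g(1+|z|^2)\cdot \bar v(1+|z|^2)^{-1}$, feed the second factor into the a priori estimate, and the Riesz argument yields $u$ with $\|u\|_{\tilde\phi}^2 = \int_D |u|^2(1+|z|^2)^{-2}e^{-\phi}\,\mathrm{d}V \leq \tfrac{1}{2}\int_D|g|^2 e^{-\phi}\,\mathrm{d}V$, which is the statement (with the extra factor $\tfrac12$ that H\"ormander's book also obtains). The three-weight machinery in H\"ormander's proof serves a different purpose --- establishing density of smooth compactly supported forms in the graph norm of $T^*$ and $S$ via an auxiliary function tied to the exhaustion --- and is not where the $(1+|z|^2)^{-2}$ factor originates.
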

\section{Main Results}\label{Cone}
In this section, we will prove the nontangential asymptotic limits of the Bergman kernel on the diagonal, and the Bergman metric and its holomorphic sectional curvature at exponentially flat infinite type boundary points of bounded smooth pseudoconvex domains in $\mathbb{C}^{n + 1}$.

Let $D \subset \mathbb{C}^{n + 1}$ be a bounded smooth domain with $0 \in bD$ and $q(t)$ be an $(\alpha, N)$ cone type stream approaching $0$. If $0$ is an exponentially flat boundary point, then there exists a neighbourhood $U$ of origin, such that
\begin{equation}\label{*}
    D \cap U = \{z \in U: \rho(z) < 0\},
\end{equation}
where the defining function $\rho$ is defined as in \eqref{1}. 

Define $T^1_t : \mathbb{C}^{n+1} \to \mathbb{C}^{n+1}$ by
\begin{equation}
    T^1_t(w) = (w_1 - i\operatorname{Im}q_1(t), w_2, \dots, w_{n+1}),
\end{equation}
and a Hermitian map $R_t^1 : \mathbb{C}^{n + 1}  \to \mathbb{C}^{n + 1}$ by
\begin{equation}
R_t^1(x) = \begin{cases}
x, & \text{if } r(t) := \| (0, q_2(t), \dots, q_{n+1}(t)) \| = 0,\\[2mm]
e_1, & \text{if } x = e_1 \text{ and } r(t) \neq 0, \text{ and}\\[2mm]
e_2, & \text{if } x = \frac{(0, q_2(t), \dots, q_{n+1}(t))}{\| (0, q_2(t), \dots, q_{n+1}(t)) \|} \text{ and } r(t) \neq 0.
\end{cases}
\end{equation}
Now, we have $\Tilde{q}(t) := R^1_t\circ T^1_t(q(t)) = (\operatorname{Re}q_1(t), r(t), 0,\dots, 0) $, with $\operatorname{Re}q_1(t) < 0$.
Let $\delta_0 > 0$ such that 
\begin{equation}
    (-4\delta_0, 4\delta_0)^2 \times B_n(0, 4\delta_0) \subset U.
\end{equation} 
Since $q(t) \to 0$ as $t \to 0^{+}$, $q(t) \in (-\delta_0, \delta_0)^2 \times B_n(0, \delta_0) \subset U$ for all sufficiently small $t > 0$.  

Hence, $\Tilde{q}(t) \in \Omega \cap R^1_t\left(T^1_t \left((-\delta_0, \delta_0)^2 \times B_n(0, \delta_0)\right)\right) \cap C_{\alpha, N} \subset \Omega \cap U \cap C_{\alpha,N}$ for all sufficiently small $t > 0$, where $\Omega = \big\{z \in \mathbb{C}^{n+1} : \rho(z) < 0\big\}$.

For small $t > 0$, there exists a unique point $p(t) \in b\Omega$
such that $\operatorname{dist}(\Tilde{q}(t), b\Omega) = |\Tilde{q}(t) - p(t)|$, $p_1(t) \in \mathbb{R}$, $p_2(t) \geq 0$, and $p_j(t) = 0$, for all $j = 3, \dots, n+1$. Therefore, $p_1(t) = - \phi\left(p_2(t)^2\right) \leq 0$. Since $p(t) \to 0$ as $t \to 0^{+}$,
$p(t) \in B_{n+1}(0, \delta_0)$ for all sufficiently small $ t > 0$.

Since $\rho(z) = \operatorname{Re}z_1 + \phi\left(|z_2|^2 + \dots + |z_{n + 1}|^2\right)$, $\nabla\rho\left(p(t)\right) = \left(1, 2p_2(t)\phi'(p_2(t)^2), 0\right) $.

Now define $T^2_t : \mathbb{C}^{n+1} \to \mathbb{C}^{n+1}$ such that
\begin{equation}
    T^2_t(w) = w - p(t),
\end{equation}
and a Hermitian linear map $R^2_t : \mathbb{C}^{n+1} \to \mathbb{C}^{n+1}$, which maps 
\begin{equation}
    \frac{\nabla \rho (p(t))}{\norm{\nabla \rho(p(t))}} \mapsto e_1, \frac{(-2p_2(t)\phi'(p_2(t)^2), 1, 0)}{\norm{\nabla \rho(p(t))}} \mapsto e_2, \text{ and }e_j \mapsto e_j, \text{ for } j = 3, \dots, n+1.  
\end{equation}
Define $\gamma_t = R^2_t \circ T^2_t $ and $A(t) = \|\nabla \rho(p(t))\|$, therefore
\begin{align}
\left(\gamma_t\right)^{-1}(z_1, \dots, z_{n+1}) &= \bigg(\frac{z_1 - 2p_2(t)z_2\phi'(p_2(t)^2)}{A(t)} - \phi(p_2(t)^2), \frac{z_2 + 2z_1p_2(t)\phi'(p_2(t)^2)}{A(t)} \nonumber\\
& \hspace{80mm} + p_2(t), z_3, \dots, z_{n+1}\bigg), \\
\rho \circ \gamma_t^{-1}(z_1, \dots, z_{n+1}) &= \frac{\operatorname{Re}z_1}{A(t)} - \frac{2p_2(t)\phi'(p_2(t)^2)}{A(t)}\operatorname{Re}z_2 - \phi(p_2(t)^2) \nonumber\\
& \hspace{2mm}  + \phi\left(\left|\frac{z_2}{A(t)} + p_2(t) + \frac{2p_2(t)\phi'(p_2(t)^2)}{A(t)}z_1\right|^2 + |z_3|^2 + \dots + |z_{n+1}|^2\right).
\end{align}
Since $W := (-\delta_0/10, \delta_0/10)^2 \times B_n(0, \delta_0/10) \subset \gamma_t \circ  R^1_t \circ T^1_t(U)$ for all sufficiently small $t > 0$, $\gamma_t\left(R^1_t \circ T^1_t(D)\right) \cap W = \gamma_t(\Omega) \cap W$. Let $\epsilon > 0$.
Define
\begin{align}
    D_t^{\epsilon} &= \left\{(z_1, \dots, z_{n+1}) \in W : \rho \circ \gamma_t^{-1}(z_1, z_2, \dots, z_{n+1}) < 0, \operatorname{Re}z_1 > - d(t)^{{1}/{(1+\epsilon)^2}}\right\}, \text{ and} \\
    \widetilde{D}_t^{\epsilon} &= \left\{z \in D_t^{\epsilon}: \left|h_{\epsilon}(z)\right| > \operatorname{exp}\left(- d(t)^{{1}/{(1+\epsilon)^2}}c_0\right)\right\},
\end{align}
where $c_0 = \operatorname{cos}\left(\frac{\pi}{2(1 + \epsilon)}\right)$
and
$h_\epsilon : D_t^{\epsilon} \to \mathbb{D}$ is defined by 
\begin{equation}
    h_\epsilon(z) = \operatorname{exp}\left(-(-z_1)^{{1}/{(1+ \epsilon)}}\right). 
\end{equation}
For $\epsilon, t > 0$, let
\begin{align}
    d^*(t) &:= 
    \operatorname{min}\{s \in\mathbb{R}^+: se_2 + (-d(t), 0, \dots, 0) \in b\gamma_t(\Omega)\}, \\
    d_1^\epsilon(t) &:=
     \operatorname{sup} \left\{|z'| : z \in W, \phi\left(\left|\frac{z_2}{A(t)} + p_2(t) + \frac{2p_2(t)\phi'(p_2(t)^2)}{A(t)}z_1\right|^2 + \dots + |z_{n+1}|^2\right) \right.\nonumber\\
     &\left. \hspace{55mm} \leq \frac{d(t)^{\frac{1}{(1+\epsilon)}}}{A(t)} 
    + \phi(p_2(t)^2) + \frac{2p_2(t)\phi'(p_2(t)^2)}{A(t)}\operatorname{Re}(z_2)\right\},\\
    d_2^\epsilon(t) &:=
    \operatorname{sup}\left\{|z'| : z \in W, \phi\left(\left|\frac{z_2}{A(t)} + p_2(t) + \frac{2p_2(t)\phi'(p_2(t)^2)}{A(t)}z_1\right|^2 + \dots + |z_{n+1}|^2\right) \right.\nonumber\\
    &\left. \hspace{55mm} \leq \frac{d(t)^{\frac{1}{(1+\epsilon)^2}}}{A(t)} 
    + \phi(p_2(t)^2) + \frac{2p_2(t)\phi'(p_2(t)^2)}{A(t)}\operatorname{Re}(z_2)\right\},
    \end{align}
and $\Sigma : \mathbb{C}^{n+1} \to \mathbb{C}^{n+1}$
by
\begin{equation}
    \Sigma(z_1, z') = \left(\frac{z_1}{d(t)}, \frac{z'}{d^*(t)}\right),
\end{equation}
where $z' = (z_2, \dots, z_{n+1})$.

In this section, we will present our findings using the quantities defined above.

We will prove the limiting behaviour of $\phi, \, d(t), \, d^*(t), d_1^\epsilon(t)$ and $d_2^\epsilon(t)$ at zero in the following lemma.
\begin{lem}\label{3.1}
Let $D \subset \mathbb{C}^{n + 1}$ be as in \eqref{*}. Then
\begin{align*}
    &\lim_{t \to 0^+} \frac{\phi \left(p_2(t)^2\right)}{d(t)} 
    = 0, &&\lim_{t \to 0^+} \frac{\phi'\left(p_2(t)^2\right)}{d(t)} 
    = 0, && \lim_{t \to 0^{+}} \frac{p_2(t)}{d^{*}(t)}  = 0,\\ &\liminf_{t \to 0^+} \frac{d^{*}(t)}{d_1^{\epsilon}(t)} \geq \frac{1}{(1 + \epsilon)^{\frac{1}{2m}}}, \text{ and}  &&\liminf_{{t \to 0^*}}\frac{d^*(t)}{d_2^\epsilon(t)} \geq \frac{1}{(1 + \epsilon)^{\frac{1}{m}}}.
\end{align*}
\end{lem}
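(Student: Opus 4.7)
The plan is to first extract explicit geometric relations between $\tilde{q}(t)$, $p(t)$ and the normal direction at $p(t)$, and then to exploit the two inputs distinguishing the exponentially flat setting: all derivatives of $\phi$ vanish at $0$, and property (4) of Definition \ref{exp flat fun} forces
\[-\frac{1}{\log \phi(x)} = c\,x^m + O(x^{m+1}),\quad x\to 0^{+},\]
for some $c > 0$, so that $\phi^{-1}(y) \sim \bigl(-1/(c\log y)\bigr)^{1/m}$ as $y\to 0^{+}$.

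First I would record, from $\tilde{q}(t) = (\operatorname{Re}q_1(t), r(t), 0, \ldots, 0)$, $p(t) = (-\phi(p_2(t)^2), p_2(t), 0, \ldots, 0)$ and the fact that $\tilde{q}(t) - p(t)$ is anti-parallel to $\nabla \rho(p(t))$, the identities
\[d(t) = A(t)\bigl(-\operatorname{Re}q_1(t) - \phi(p_2(t)^2)\bigr),\qquad r(t) = p_2(t)\Bigl(1 - \frac{2 d(t)\,\phi'(p_2(t)^2)}{A(t)}\Bigr),\]
together with $A(t) = \sqrt{1 + 4p_2(t)^2 \phi'(p_2(t)^2)^2}\to 1$. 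Infinite-order flatness of $\phi'$ at $0$ makes the correction in the second identity vanish, so $r(t) \asymp p_2(t)$. The cone condition $\operatorname{Re}q_1(t) < -\alpha|q'(t)|^N$ combined with $\phi(p_2(t)^2) = o(p_2(t)^N) = o(r(t)^N)$ (exponential flatness) gives $\phi(p_2(t)^2) \le \tfrac12|\operatorname{Re}q_1(t)|$ for small $t$; the first identity then yields $d(t) \ge \tfrac12 \alpha\,r(t)^N$. Using exponential flatness of $\phi$ and $\phi'$ once more to divide, items (1) and (2) follow.

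For item (3), I would derive the defining equation for $d^*(t)$ by substituting $z = (-d(t), s, 0, \ldots, 0)$ into the displayed formula for $\rho\circ\gamma_t^{-1}$ and setting it to zero. Using items (1) and (2) to absorb every $O(p_2(t)\phi'(p_2(t)^2))$ and $O(\phi(p_2(t)^2))$ term as $o(d(t))$, the equation reduces asymptotically to $\phi\bigl((d^*(t) + p_2(t))^2\bigr) \sim d(t)$, hence $d^*(t) + p_2(t) \sim \sqrt{\phi^{-1}(d(t))}$. The asymptotic form of $\phi^{-1}$ recorded above shows that $\sqrt{\phi^{-1}(d(t))}$ decays only like $(-\log d(t))^{-1/(2m)}$, whereas the cone condition together with item (1) gives the polynomial bound $p_2(t) \le C d(t)^{1/N}$. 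Comparing a logarithmic and a polynomial rate forces $p_2(t)/d^*(t)\to 0$, and as a by-product $d^*(t) \sim \sqrt{\phi^{-1}(d(t))}$.

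For (4) and (5), I would inspect the sets defining $d_1^\epsilon(t)$ and $d_2^\epsilon(t)$: modulo the perturbations already absorbed, their defining inequalities collapse to $\phi(|z'|^2) \lesssim d(t)^{1/(1+\epsilon)}$ and $\phi(|z'|^2) \lesssim d(t)^{1/(1+\epsilon)^2}$ respectively. Inverting $\phi$ yields $d_1^\epsilon(t)^2 \sim \phi^{-1}\bigl(d(t)^{1/(1+\epsilon)}\bigr)$ and $d_2^\epsilon(t)^2 \sim \phi^{-1}\bigl(d(t)^{1/(1+\epsilon)^2}\bigr)$. From $\phi^{-1}(y) \sim (-1/(c\log y))^{1/m}$ one reads $\phi^{-1}(d(t)^{1/(1+\epsilon)}) \sim (1+\epsilon)^{1/m}\phi^{-1}(d(t))$ and $\phi^{-1}(d(t)^{1/(1+\epsilon)^2}) \sim (1+\epsilon)^{2/m}\phi^{-1}(d(t))$. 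Dividing $d^*(t) \sim \sqrt{\phi^{-1}(d(t))}$ by these two quantities delivers the liminfs $1/(1+\epsilon)^{1/(2m)}$ and $1/(1+\epsilon)^{1/m}$.

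The main obstacle I anticipate is the bookkeeping required to show that the perturbation factors $A(t)$, $2p_2(t)\phi'(p_2(t)^2)$ and the shift by $p_2(t)$ inside each instance of $\phi$ are genuinely negligible against the leading logarithmic scale $(-\log d(t))^{-1/m}$. Exponential flatness kills every polynomial factor in $p_2(t)$, while property (4) keeps the logarithmic scale alive, so the two scales separate cleanly; the delicate point is that each replacement $\phi\bigl((s/A(t) + p_2(t) + \cdots)^2\bigr) \to \phi(s^2)$ introduces an error that must not compete with the target factor $(1+\epsilon)^{1/(2m)}$ or $(1+\epsilon)^{1/m}$ in items (4) and (5).
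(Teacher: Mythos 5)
Your proposal is correct, and for the second half of the lemma it takes a genuinely different (and more explicit) route than the paper. For items (1) and (2) you and the paper argue essentially identically: use the geometry of the foot point $p(t)$ to record $d(t) = A(t)\bigl(-\operatorname{Re}q_1(t) - \phi(p_2(t)^2)\bigr)$ and $r(t) \sim p_2(t)$, combine with the cone condition to get $d(t) \gtrsim p_2(t)^N$, and let infinite-order flatness of $\phi$ and $\phi'$ finish. The divergence begins at item (3). The paper never inverts $\phi$: it deduces $\lim d(t)/d^*(t) = 0$ from the defining relation for $d^*(t)$, then $\lim p_2(t)^{N+1}/d^*(t) = 0$, then $\lim d(t)/d^*(t)^{N+1} = 0$, and finally $\lim p_2(t)/d^*(t) = 0$ by splitting as $\tfrac{p_2(t)}{d(t)^{1/(N+1)}}\cdot\tfrac{d(t)^{1/(N+1)}}{d^*(t)}$. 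You instead extract the asymptotic $\phi^{-1}(y) \sim \bigl(-1/(c\log y)\bigr)^{1/m}$ from property (4) of exponential flatness, derive $d^*(t) \sim \sqrt{\phi^{-1}(d(t))}$, and observe that $p_2(t) \lesssim d(t)^{1/N}$ decays polynomially while $\sqrt{\phi^{-1}(d(t))}$ decays only logarithmically. Both arguments are sound; the paper's is shorter and more elementary, yours yields the sharper statement $d^*(t)\sim\sqrt{\phi^{-1}(d(t))}$ as a by-product. Most importantly, for items (4) and (5) the paper simply asserts the conclusion ``by using the definitions of $\phi, d_1^\epsilon(t), d_2^\epsilon(t)$ and $d^*(t)$,'' whereas your inversion of $\phi$ via property (4) supplies the actual argument: it shows $d_1^\epsilon(t)^2 \sim \phi^{-1}\bigl(d(t)^{1/(1+\epsilon)}\bigr) \sim (1+\epsilon)^{1/m}\phi^{-1}(d(t))$ and $d_2^\epsilon(t)^2 \sim (1+\epsilon)^{2/m}\phi^{-1}(d(t))$, and dividing by $d^*(t)$ gives the two liminf bounds (in fact as genuine limits). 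The bookkeeping you flag as the delicate step — absorbing the shifts by $p_2(t)$, the factor $A(t)$, and the $\phi'(p_2(t)^2)$ corrections — does go through, since $p_2(t)/d^*(t) \to 0$ from item (3) and $\phi^{-1}$ is logarithmically slowly varying, so that multiplicative $(1+o(1))$ perturbations of the argument of $\phi^{-1}$ do not change the asymptotic. No gap.
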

\begin{proof}
Without loss of generality, we may assume that $p_2(t) \neq 0$ for small $t > 0$. Since $\Tilde{q}(t) \in C_{\alpha, N}$, for small $t > 0$, $\operatorname{Re}q_1(t) < -\alpha \left({r}(t)\right)^N$.
We have
\begin{equation}
    p(t) = \Tilde{q}(t) + \frac{\nabla \rho(p(t))d(t)}{A(t)},
\end{equation}
which implies
\begin{equation}\label{p_1}
    p_1(t) = \operatorname{Re}q_1(t) + \frac{d(t)}{A(t)},
\end{equation}
and
\begin{equation}\label{p_2}
    p_2(t) = {r}(t) + \frac{2p_2(t)\phi'\left(p_2(t)^2\right)d(t)}{A(t)}.
\end{equation}
From \eqref{p_2}, we get 
\begin{equation}\label{7}
    \lim_{t \to 0^+} \frac{{r}(t)}{p_2(t)} = 1.
\end{equation}
Now from \eqref{p_1}, we have
\begin{align*}
    -&\phi\left(p_2(t)^2\right) = p_1(t) = \operatorname{Re}q_1(t) + \frac{d(t)}{A(t)} < -\alpha \left(r(t)\right)^N + \frac{d(t)}{A(t)}
\end{align*}
and hence
\begin{equation*}
\frac{d(t)}{A(t)} > \alpha \left({r}(t)\right)^N - \phi\left(p_2(t)^2\right)
\end{equation*}
Using \eqref{7}, 
$\alpha \left(r(t)\right)^N - \phi\left(p_2(t)^2\right) \geq 0$
for all sufficiently small $t > 0$. Therefore
\begin{equation*}
    \frac{A(t)}{d(t)} < \frac{1}{\alpha \left({r}(t)\right)^N - \phi\left(p_2(t)^2\right)},
\end{equation*}
which implies 
\begin{equation*}
    \frac{p_2(t)^{(N+1)}}{d(t)} < \frac{1}{A(t)\left(\left(\frac{{r}(t)}{p_2(t)}\right)^{N}\cdot \frac{1}{p_2(t)} - \frac{\phi\left(p_2(t)^2\right)}{p_2(t)^{(N+1)}}\right)},
\end{equation*}
and hence
\begin{equation}\label{8}
    \lim_{t \to 0^+}\frac{p_2(t)^{(N+1)}}{d(t)} = 0.
\end{equation}
So,
\begin{equation}\label{phi}
        \lim_{t \to 0^+} \frac{\phi\left(p_2(t)^2\right)}{d(t)} =         \lim_{t \to 0^+} \frac{\phi\left(p_2(t)^2\right)}{p_2(t)^{(N+1)}}\cdot \frac{p_2(t)^{(N+1)}}{d(t)} = 0.
\end{equation}
Similarly,
\begin{equation}\label{phi'}
\lim_{t \to 0^+} \frac{\phi'\left(p_2(t)^2\right)}{d(t)} = \lim_{t \to 0^+} \frac{\phi'\left(p_2(t)^2\right)}{p_2(t)^{(N+1)}}\cdot \frac{p_2(t)^{(N+1)}}{d(t)} = 0.
\end{equation}
Since $(-d(t), d^*(t), 0, \dots, 0) \in b\gamma_t(\Omega)$,
\begin{equation}\label{9}
    \frac{d(t)}{A(t)} + \frac{2p_2(t)\phi'\left(p_2(t)^2\right)}{A(t)}d^*(t) + \phi(p_2(t)^2) = \phi\left(\left[\frac{d^*(t)}{A(t)} + p_2(t) - \frac{2p_2(t)\phi'(p_2(t)^2)}{A(t)}d(t)\right]^2\right).
\end{equation}
From the above equation, we get $\lim_{t \to 0^{+}} d(t)/d^{*}(t) = 0
$, which implies \begin{equation}\label{10}
    \lim_{t \to 0^{+}} \frac{p_2(t)^{(N + 1)}}{d^{*}(t)} = \lim_{t \to 0^+} \frac{p_2(t)^{N + 1}}{d(t)} \cdot \frac{d(t)}{d^{*}(t)} = 0. 
\end{equation}
From \eqref{9} and \eqref{10}, we get $\lim_{t \to 0^+} {d(t)}/{d^{*}(t)^{N + 1}} = 0$. Hence
\begin{equation}\label{p_2/d^*}
    \lim_{t \to 0^{+}} \frac{p_2(t)}{d^{*}(t)} = \lim_{t \to 0^{+}} \frac{p_2(t)}{d(t)^{\frac{1}{N + 1}}} \cdot  \frac{d(t)^{\frac{1}{N + 1}}}{d^{*}(t)} = 0.
\end{equation}

By using the definitions of $\phi, d_1^{\epsilon}(t),  d_2^{\epsilon}(t)$, and $d^*(t)$, we get
\begin{equation}
\liminf_{t \to 0^{+}} \frac{d^{*}(t)}{d^{\epsilon}_1(t)} \geq \frac{1}{(1 + \epsilon)^{\frac{1}{2m}}}, \, \text{and }
\liminf_{{t \to 0^*}}\frac{d^*(t)}{d_2^\epsilon(t)} \geq \frac{1}{(1 + \epsilon)^{\frac{1}{m}}}.
\end{equation}
This completes the proof.
\end{proof}
In the following lemma, we will prove the localization of extremal integrals.
\begin{lem}\label{localisation lemma 4}
    Let $D \subset \mathbb{C}^{n+1}$ be as in \eqref{*}. Further assume that $D$ is pseudoconvex. For all sufficiently small $t > 0$, let $D_t = \gamma_t\left(R_t^1 \circ T_t^1(D)\right)$. Then, for $\epsilon > 0$, 
    \begin{equation*}
    \lim_{t \to 0^+} {\frac{I_0^{D_t}\left(-d(t), 0\right)}{I_0^{D_t^\epsilon}\left(-d(t), 0\right)}} = 1 \text{ and }\quad
    \lim_{t \to 0^+} {\frac{I_j^{D_t}\left((-d(t), 0), \xi(t)\right)}{I_j^{D_t^\epsilon}\left((-d(t), 0), \xi(t)\right)}} = 1
    \end{equation*}
     for $j = 1,2$, and $\xi(t) \in \mathbb{C}^{n + 1} \setminus \{0\}$.
\end{lem}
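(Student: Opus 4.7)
The easy direction $I_j^{D_t^\epsilon}(\cdot) \leq I_j^{D_t}(\cdot)$ for $j = 0, 1, 2$ is immediate from restriction: any admissible competitor for $I_j^{D_t}$ restricts to an admissible competitor for $I_j^{D_t^\epsilon}$ on the subdomain $D_t^\epsilon \subset D_t$ with no larger $L^2$-norm, so the ratios in the lemma are automatically $\geq 1$. The entire content is therefore the reverse bound $I_j^{D_t} \leq (1+o(1)) I_j^{D_t^\epsilon}$ as $t \to 0^+$.

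For the reverse direction, write $p_t = (-d(t), 0)$ and fix $j$. Choose a near-minimizer $f_t$ for $I_j^{D_t^\epsilon}(p_t, \xi(t))$ and a smooth cutoff $\chi_t$ with $\chi_t \equiv 1$ on $\widetilde D_t^\epsilon$, $\operatorname{supp}\chi_t \Subset D_t^\epsilon$, and $|\bar\partial\chi_t|$ uniformly bounded. Since $p_t \in \widetilde D_t^\epsilon$ and $\chi_t \equiv 1$ there, $\chi_t f_t$ inherits the vanishing/derivative constraints of $f_t$ at $p_t$, but fails to be holomorphic on $D_t$: its $\bar\partial$ equals $g_t := f_t\bar\partial\chi_t$ and is supported in $D_t^\epsilon \setminus \widetilde D_t^\epsilon$, where by construction $|h_\epsilon(z)| \leq \exp(-c_0 d(t)^{1/(1+\epsilon)^2})$. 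The plan is to correct this by solving $\bar\partial u_t = g_t$ on the pseudoconvex domain $D_t$ with Hörmander's theorem (Theorem \ref{Hormander}) against the plurisubharmonic weight
\[
\varphi_t(z) \;=\; 2(n+j+1)\log|z - p_t| \;-\; 2M_t \log|h_\epsilon(z)|,
\]
and setting $F_t := \chi_t f_t - u_t$, where $M_t \to \infty$ is tuned so that $M_t\,d(t)^{1/(1+\epsilon)} \to 0$ while $M_t\,d(t)^{1/(1+\epsilon)^2} \to \infty$. The exponent gap $1/(1+\epsilon)^2 < 1/(1+\epsilon)$, combined with $d(t) < 1$ for small $t$, is exactly what makes such a choice possible.

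The first term of $\varphi_t$ is the standard log-pole weight, and the usual non-integrability argument in $\mathbb{C}^{n+1}$ forces any $L^2(\varphi_t)$ solution $u_t$ to vanish at $p_t$ to order at least $j+1$, which is precisely what is needed for $F_t$ to retain the constraints defining $I_j^{D_t}(p_t, \xi(t))$. The second term makes $e^{-\varphi_t}$ comparable to $|z-p_t|^{-2(n+j+1)}|h_\epsilon|^{2M_t}$; on $\operatorname{supp}(\bar\partial\chi_t)$ (where $|z-p_t|$ is bounded below) this is at most $C \exp(-2M_t c_0 d(t)^{1/(1+\epsilon)^2})$, whereas on a fixed neighborhood of $p_t$ the contribution of $-2M_t\log|h_\epsilon|$ is $2M_t d(t)^{1/(1+\epsilon)} + O(1) = o(1)$, so the weight is bounded there. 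Hörmander's estimate then yields
\[
\int_{D_t} |u_t|^2\,dV \;\leq\; C\exp\!\left(-M_t c_0\, d(t)^{1/(1+\epsilon)^2}\right) \int_{D_t^\epsilon}|f_t|^2\,dV
\]
after passing from weighted to unweighted $L^2$ via the lower bound on $e^{-\varphi_t}$ near $p_t$. Consequently $F_t \in A^2(D_t)$ is admissible for $I_j^{D_t}(p_t, \xi(t))$ with $\int_{D_t}|F_t|^2 \leq (1+o(1))\int_{D_t^\epsilon}|f_t|^2$, as required.

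The principal technical obstacle is the calibration of $M_t$: it must grow fast enough that the $\bar\partial$-error is exponentially suppressed on $\operatorname{supp}(\bar\partial\chi_t)$, yet slowly enough that $e^{-\varphi_t}$ stays uniformly bounded below on a neighborhood of $p_t$, so Hörmander's weighted estimate translates to a genuinely small unweighted norm for $u_t$. Lemma \ref{3.1} enters at this stage: its control of $\phi(p_2(t)^2)/d(t)$, $p_2(t)/d^*(t)$, and the comparability between $d^*(t)$ and $d_j^\epsilon(t)$ is what allows the constants $C$ above to be taken independently of $t$, so the error bound is uniform and vanishes as $t \to 0^+$.
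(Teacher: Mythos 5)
Your overall architecture matches the paper's: cut off a near-minimizer $f_t$ by $\chi_t$, correct the resulting $\bar\partial$-error using Hörmander's theorem, exploit the fact that $|h_\epsilon|$ is exponentially small on $D_t^\epsilon\setminus\widetilde D_t^\epsilon$ compared to its value at $p_t=(-d(t),0)$, and calibrate a growing parameter ($M_t$ for you, $k\approx d(t)^{-c_\epsilon}$ in the paper) inside the exponent gap $1/(1+\epsilon)^2<c_\epsilon<1/(1+\epsilon)$. You also correctly locate the role of Lemma~\ref{3.1}. But the specific implementation of the $h_\epsilon$-trick is where you diverge from the paper, and this is where the gap lies.

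The paper uses $h_\epsilon^k$ as a \emph{bounded holomorphic multiplier}: it sets $\alpha=\bar\partial(\chi_t f h_\epsilon^k)$, applies Hörmander with the purely polynomial weight $(2n+8)\log|z-p_t|$, and then divides the corrected function by $h_\epsilon(p_t)^k$ at the end. You instead move $h_\epsilon$ into the plurisubharmonic weight $\varphi_t = 2(n+j+1)\log|z-p_t| - 2M_t\log|h_\epsilon(z)|$. This introduces two problems. First, $h_\epsilon(z)=\exp(-(-z_1)^{1/(1+\epsilon)})$ is only defined and holomorphic where $\operatorname{Re}z_1<0$; the paper's coordinates guarantee this on $D_t\cap W$ (because $\phi$ is convex), but $D_t=\gamma_t(R_t^1\circ T_t^1(D))$ is a bounded domain that is not contained in the half-space $\{\operatorname{Re}z_1<0\}$, so $\varphi_t$ is simply not defined (let alone plurisubharmonic) on all of $D_t$, and Theorem~\ref{Hormander} cannot be applied with it. Second, even granting a global definition, the passage from the weighted conclusion of Hörmander to an unweighted bound $\|u_t\|_{L^2(D_t)}^2\le\text{small}\cdot\|f_t\|_{L^2(D_t^\epsilon)}^2$ requires a \emph{uniform lower bound} for $e^{-\varphi_t}/(1+|z|^2)^2$ over \emph{all} of $D_t$, not merely ``near $p_t$'' as you assert. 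But $e^{-\varphi_t}=|z-p_t|^{-2(n+j+1)}|h_\epsilon|^{2M_t}$, and on $D_t^\epsilon\setminus\widetilde D_t^\epsilon$ the factor $|h_\epsilon|^{2M_t}$ is precisely as small as $\exp(-2M_t c_0 d(t)^{1/(1+\epsilon)^2})$ — which by your own calibration tends to zero — so the required lower bound fails exactly where the error lives, and the unweighted $L^2$ estimate on $u_t$ does not follow. The paper sidesteps both issues simultaneously by keeping the weight to a globally defined log pole and encoding the exponential smallness through the holomorphic factor $h_\epsilon^k$, whose effect is undone only at the single point $p_t$ via the division by $h_\epsilon(p_t)^k$. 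Replacing your weight modification by the multiplier $h_\epsilon^{M_t}$ applied to $f_t$ before cutting off would repair the argument and essentially reproduce the paper's proof.
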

\begin{proof}
    Let $\epsilon > 0$. Choose a cut-off function $\chi \in C_c^\infty(B_1(0,1) \times B_{n}(0, 2))$ such that 
\begin{equation*}
    \chi = 1 \text{ on } B_1(0, 1/2) \times B_{n}(0,1), \ \ \ 0 \leq \chi \leq 1. 
\end{equation*}
Define,
\begin{equation*}
    \chi_t(z) = \chi\left(\frac{z_1}{d(t)^{\frac{1}{(1+\epsilon)^2}}}, \frac{z'}{d_1^{\epsilon}(t)}\right) \text{ for } z \in (z_1, z') \in \mathbb{C}^{n + 1}.
\end{equation*}
Let $ z \in \widetilde{D}_t^\epsilon$.
Then, $|z_1| \leq d(t)^{\frac{1}{(1+\epsilon)}}$ and
there exist $t_0(\epsilon) > 0$ such that
\begin{align*}
    \frac{|z_1|}{d(t)^{\frac{1}{(1+\epsilon)^2}}} \leq d(t)^\frac{\epsilon}{(1+\epsilon)^2} < 1/2, \text{  and  }
    \frac{|z'|}{d_1^\epsilon(t)} < 1,
\end{align*}
for each $0<t<t_0(\epsilon)$. Therefore $\chi_t = 1$ on $\widetilde{D}_t^\epsilon$. Let
\begin{equation*}
V_t = W \cap \left\{z \in \mathbb{C}^{n+1} : |\operatorname{Re}z_1| < d(t)^\frac{1}{(1+\epsilon)^2} \right\},
\end{equation*}
where $W$ is defined at the beginning of this section.
Note $\chi_t \in C_c^\infty(V_t)$, and $V_t \cap D_t = D_t^\epsilon$. Let $f \in A^2(D_t^\epsilon)$ and $k \in \mathbb{N}$. Set
\begin{align*}
    \alpha &= \overline{\partial}\left(f\chi_t h_{\epsilon}^k\right) \text{ on } D_t, \text{ and}\\
    \phi(z) &= (2n+8)\operatorname{log}|z - (-d(t),0)|, \text{ for } z \in \mathbb{C}^{n + 1}.   
\end{align*}
By Theorem \ref{Hormander}, we have $u \in L^2_{\operatorname{loc}}(D_t)$ satisfying
\begin{equation*}
    \overline{\partial}u = \alpha \text{ on } D_t,
\end{equation*}and
\begin{equation}\label{Hor inequality 2}
\int_{D_t} \frac{|u|^2e^{-\phi}}{(1+ |z|^2)^2} \, \mathrm{d}V \leq \int_{D_t}|\alpha|^2 e^{-\phi} \, \mathrm{d}V.
\end{equation}
Now,
\begin{align}\label{estimate alpha 2}
    \int_{D_t}|\alpha|^2 e^{-\phi} \, \mathrm{d}V &= \int_{D_t\cap V_t}\frac{|f|^2|\overline{\partial}X_t|^2|h_{\epsilon}|^{2k}}{|z - (-d(t), 0)|^{(2n+8)}} \, \mathrm{d}V \nonumber\\
    &\leq \frac{Ma_t^{2k}}{d(t)^2}\int_{D_t^\epsilon \setminus 
    \widetilde{D}_t^\epsilon} \frac{|f|^2}{|z - (-d(t), 
 0)|^{(2n + 8)}}\, \mathrm{d}V \nonumber\\
    &\leq \frac{M a_t^{2k}}{d(t)^{(2n + 10)}} \int_{D_t^\epsilon \setminus \widetilde{D}_t^\epsilon}|f|^2 \, \mathrm{d}V \nonumber\\
    & \leq \frac{Ma_t^{2k}}{d(t)^{(2n+10)}} \norm{f}_{L^2(D_t^\epsilon)}^2,
\end{align}
where $a_t = \operatorname{exp}\left(-c_0d(t)^{{1}/{(1+\epsilon)^2}}\right)$, and $M > 0$, which may change at each step, depends on both the first-order derivatives of $\chi$ and $\epsilon > 0$.
Now we estimate the left side of \eqref{Hor inequality 2}.
\begin{equation}\label{estimate u_0 2}
   \int_{D_t} \frac{|u|^2e^{-\phi}}{(1+ |z|^2)^2} \, \mathrm{d}V =\int_{D_t} \frac{|u|^2}{(1+|z|^2)^2 \, |z - (-d(t),0)|^{(2n+8)}} \, \mathrm{d}V \geq M_0 \norm{u}_{L^2(D_t)}^2,
\end{equation}
where $M_0 > 0$
depends only on the domain $D$.
From \eqref{Hor inequality 2},
\eqref{estimate alpha 2}, and \eqref{estimate u_0 2}, we have
\begin{equation}\label{res equation}
    \norm{u}_{L^2(D_t)}^2 \leq \frac{M a_t^{2k}}{M_0d(t)^{(2n + 10)}}\norm{f}_{L^2(D_t^\epsilon)}^2, 
\end{equation}
and 
\begin{equation}\label{19}
    \int_{D_t} \frac{|u|^2}{(1+|z|^2)^2 \, |z - (-d(t),0)|^{(2n+8)}} \, \mathrm{d}V \leq \frac{M a_t^{2k}}{d(t)^{(2n + 10)}}.
\end{equation}
From the above equation \eqref{19}, we get
\begin{equation}
        \frac{\partial^{|\alpha| + |\beta|} u}{\partial z^{\alpha} \partial \bar{z}^{\beta}}(-d(t), 0) = 0 \text{ for all multi-indices $\alpha, \beta$ with $|\alpha| + |\beta| \leq 2$.}
\end{equation}
We now apply the above for the $f \in A^2\left(D_t^{\epsilon}\right)$ that satisfies 
\begin{equation*}
    I_0^{D_t^{\epsilon}}(-d(t), 0) = \norm{f}_{L^2(D_t^\epsilon)}^2, \ \ \ f(-d(t), 0) = 1.
\end{equation*}
Set
\begin{equation*}
    g = \frac{\chi f h_{\epsilon}^k - u}{\left(h_{\epsilon}(-d(t),0)\right)^k} \text{ on $D_t$}.
\end{equation*}
It follows that $g$
is holomorphic on $D_t$ and $g(-d(t), 0) = 1$. Therefore
\begin{equation*}
    I_0^{D_t}(-d(t), 0) \leq
    \norm{g}_{L^2(D_t)}^2 \leq \left(\frac{\norm{f}_{L^2(D_t^{\epsilon})} + \norm{u}_{L^2(D_t)}}{\left(h_{\epsilon}(-d(t), 0)\right)^k}\right)^2\leq \left(\frac{1 + \frac{\Tilde{M} a_t^{k}}{d(t)^{(n+5)}}}{\left(h_{\epsilon}(-d(t), 0)\right)^k}\right)^2 \norm{f}_{L^2(D_t^{\epsilon})}^2,
\end{equation*}
where $\Tilde{M} = \sqrt{M/M_0}$. So,
\begin{equation*}
   \frac{I_0^{D_t}\left(-d(t), 0\right)}{I_0^{D_t^\epsilon}\left(-d(t), 0\right)} \leq    \left(\frac{1 + \frac{\Tilde{M} a_t^{k}}{d(t)^{(n + 5)}}}{\left(h_{\epsilon}(-d(t), 0)\right)^k}\right)^2,
\end{equation*}
for each $0 < t < t_0(\epsilon)$.
For every $\epsilon > 0$, there exist $c_{\epsilon} > 0$, such that
\begin{equation*}
    \frac{1}{(1 + \epsilon)^2} < c_{\epsilon} < \frac{1}{(1+\epsilon)}.
\end{equation*}
Choosing $k$ to be the greatest integer of $d(t)^{-c_{\epsilon}}$, we get
\begin{equation}
\lim_{t \to 0^+} {\frac{I_0^{D_t}\left(-d(t), 0\right)}{I_0^{D_t^\epsilon}\left(-d(t), 0\right)}} = 1,
\end{equation}for each $\epsilon > 0$.

Similarly, 
\begin{equation}
    \lim_{t \to 0^+} {\frac{I_j^{D_t}\left((-d(t), 0), \xi(t) \right)}{I_j^{D_t^\epsilon}\left((-d(t), 0), \xi(t) \right)}} = 1 \text{ for all } j = 1,2,
\end{equation}
for each $\epsilon > 0$, and $\xi(t) \in \mathbb{C}^{n + 1} \setminus \{0\}$.
\end{proof}
In the following lemma, we will show that the limiting domain of $f \circ \Sigma (D_t^\epsilon)$ is $\mathbb{D} \times B_n(0,1)$. 
\begin{lem}\label{ScalingLemma2}
    Let $D \subset \mathbb{C}^{n + 1}$ be as in \eqref{*}. Then, for every $\epsilon, \delta > 0,$ there exists $t_0(\delta, \epsilon) > 0$ such that
    \begin{equation}\label{inclusions 2}
        (1-\delta)\mathbb{D}\times B_n(0,1) \subset f \circ \Sigma \left(D_t^\epsilon\right) \subset
        \mathbb{D} \times B_n(0, {d}_2^\epsilon(t)/d^*(t)),
    \end{equation}
for each $ 0 < t < t_0(\delta, \epsilon
)$, where $f(z_1, z') = ((1+z_1)/(1-z_1), z')$.
\end{lem}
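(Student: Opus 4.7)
The plan is to establish the two inclusions separately by working with the explicit inverse of the map $\Phi := f \circ \Sigma$, which is the biholomorphism $(w_1,w') \mapsto (d(t)(w_1-1)/(w_1+1),\ d^*(t)w')$. The first inclusion amounts to showing that the preimage of every $(w_1,w')$ in the target actually lies in $D_t^\epsilon$ for small $t$; the second, that every $z \in D_t^\epsilon$ is sent into $\mathbb{D} \times B_n(0, d_2^\epsilon(t)/d^*(t))$.

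For the first inclusion, I would fix $(w_1,w')$ with $|w_1| \leq 1-\delta$ (and effectively $|w'|$ bounded away from $1$, as the argument for an individual $(w_1,w')$ requires a threshold depending on how close $|w'|$ is to $1$), and set $z = (d(t)h(w_1),\ d^*(t)w')$ where $h(w_1) = (w_1-1)/(w_1+1)$. Membership $z \in W$ and the constraint $\operatorname{Re}z_1 > -d(t)^{1/(1+\epsilon)^2}$ are immediate from $|z_1| \leq 2d(t)/\delta$, together with $d(t) \ll d(t)^{1/(1+\epsilon)^2}$ as $t \to 0$. The real content is verifying $\rho \circ \gamma_t^{-1}(z) < 0$. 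I would split this into the three terms appearing in the explicit formula derived before the lemma and estimate them using Lemma~\ref{3.1}: a Möbius computation gives $\operatorname{Re}h(w_1) \leq -\delta/(2-\delta)$, so the first term is $\leq -c(\delta)d(t)(1+o(1))/A(t)$; the cross correction is controlled by $p_2(t)\phi'(p_2(t)^2) = o(d(t))$, as is $-\phi(p_2(t)^2)$; and for the third term, the argument $|\widetilde{z}_2|^2 + |z_3|^2 + \dots + |z_{n+1}|^2$ equals $d^*(t)^2(|w'|^2 + o_t(1))$, so applying the scaling property (5) of $\phi$ together with equation~\eqref{9} (which shows $\phi(d^*(t)^2(1+o(1))) \sim d(t)$) gives $\phi(\cdot) = o(d(t))$, and the three contributions combine to $< 0$.

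For the second inclusion, the second-coordinate bound is the easier half: for $z \in D_t^\epsilon$, combining $\rho \circ \gamma_t^{-1}(z) < 0$ with $\operatorname{Re}z_1 > -d(t)^{1/(1+\epsilon)^2}$ gives precisely the inequality on $\phi(|\widetilde{z}_2|^2 + \sum_{j\geq 3}|z_j|^2)$ that defines the supremum $d_2^\epsilon(t)$, so $|z'| \leq d_2^\epsilon(t)$. For the first coordinate I need $\operatorname{Re}z_1 < 0$, which is equivalent to $f(z_1/d(t)) \in \mathbb{D}$. Setting $\zeta = \gamma_t^{-1}(z) \in \Omega$, unpacking the affine formula for $\gamma_t$ yields
\begin{equation*}
\operatorname{Re}z_1 = \frac{1}{A(t)}\left[\operatorname{Re}\zeta_1 + \phi(p_2(t)^2) + 2p_2(t)\phi'(p_2(t)^2)(\operatorname{Re}\zeta_2 - p_2(t))\right],
\end{equation*}
and since $\operatorname{Re}\zeta_1 < -\phi(|\zeta'|^2)$ it is enough to show that $g(\zeta') := -\phi(|\zeta'|^2) + \phi(p_2(t)^2) + 2p_2(t)\phi'(p_2(t)^2)(\operatorname{Re}\zeta_2 - p_2(t)) \leq 0$ for all $\zeta' \in \mathbb{C}^n$. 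A straightforward critical-point computation (using $\phi'>0$, $\phi''>0$ on $(0,\infty)$, and the strict monotonicity of $\eta \mapsto \eta\phi'(\eta^2)$) pins the unique critical point at $\operatorname{Re}\zeta_2 = p_2(t)$, $\operatorname{Im}\zeta_2 = 0$, $\zeta_j = 0$ for $j \geq 3$, where $g = 0$; boundary behavior ($g \to -\infty$ as $|\zeta'| \to \infty$) shows this is a global maximum.

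The main obstacle will be the delicate third-term bound in the first inclusion: without the scaling property~\eqref{Scaling} of $\phi$, the value $\phi(|w'|^2 d^*(t)^2)$ sits on exactly the same scale $d(t)$ as the dominant term, and only the sub-multiplicative collapse $\phi(rx)/\phi(r) \to 0$ for $x < 1$ rescues the estimate; this is also why uniformity in the lemma requires $|w'|$ strictly less than $1$ (morally a second $(1-\delta)$-shrinkage). The $\operatorname{Re}z_1 < 0$ step in Part~2 is less obvious than it looks, because positive contributions from the curvature-like term $2p_2(t)\phi'(p_2(t)^2)\operatorname{Re}\zeta_2$ do not cancel automatically — one genuinely needs the exponentially flat structure of $\phi$ encoded in the critical-point analysis above.
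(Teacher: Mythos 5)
Your argument follows essentially the same route as the paper's: realize the target through the M\"obius rescaling, control the crucial $\phi$-term by combining the scaling hypothesis~\eqref{Scaling} with relation~\eqref{9}, and for the reverse inclusion read off $|z'| \leq d_2^\epsilon(t)$ directly from the definition of $d_2^\epsilon(t)$ while obtaining $\operatorname{Re}z_1 < 0$ from convexity of $\phi$. Your explicit critical-point computation for the latter is just the supporting-hyperplane inequality that the paper invokes in a single line, and your remark that the tangential coordinate also needs a $(1-\delta)$-shrinkage is correct: this is what the paper's proof (and its later use in Lemma~\ref{I_1 2} via the dilation scaling of $I_j$) actually establishes, even though the lemma's displayed statement writes $(1-\delta)\mathbb{D}\times B_n(0,1)$ rather than $(1-\delta)\bigl(\mathbb{D}\times B_n(0,1)\bigr)$.
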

\begin{proof}
    Consider $(1-\delta)(z_1, z') \in (1-\delta)\mathbb{D} \times B_n(0,1)$.
    Define,
    \begin{align*}
        w_1 = d(t)\left(\frac{(1-\delta)z_1 -1}{(1-\delta)z_1 + 1}\right), \text{ and}\quad
        w' = (1-\delta) d^*(t)z'.
    \end{align*}
    Then $f \circ \Sigma (w_1, w') = (1-\delta)(z_1, z')$.

To prove the first inclusion of \eqref{inclusions 2}, we show that $(w_1, w') \in D_t^{\epsilon}$.

Since
\begin{align*}
    \operatorname{Re}w_1 = -d(t)\left(\frac{1- (1-\delta)^2|z_1|^2}{\left((1-\delta)x_1 + 1\right)^2 + (1-\delta)^2y_1^2}\right),
\end{align*}
where $z_1 = x_1 + iy_1 \in \mathbb{D}$,
\begin{equation}\label{eq 1*}
        \operatorname{Re}w_1 > \frac{-d(t)}{\delta^2} = - d(t)^{\frac{1}{(1+\epsilon)}}\frac{d(t)^\frac{\epsilon}{(1+ \epsilon)}}{\delta^2}.
\end{equation}
Since $d(t) \to 0$ as $t \to 0^+$, there exists $t_0(\delta, \epsilon)> 0$ such that
\begin{equation}\label{eq 2*}
    d(t)^{\frac{\epsilon}{(1 + \epsilon)}} < \delta^2,
\end{equation}
for each $0<t< t_0(\delta, \epsilon)$. So, from equations \eqref{eq 1*} and \eqref{eq 2*}, we get
\begin{equation*}
    \operatorname{Re}w_1 > -{d(t)^{\frac{1}{(1 + \epsilon)}}} \geq -{d(t)^{\frac{1}{(1 + \epsilon)^2}}}.
\end{equation*}
Also $|w_1| < \delta_0/10$, since $d(t) \to 0$ as $t \to 0^{+}.$

Since $w' = (1-\delta)d^*(t)z'$ and $d^{*}(t) \to 0$ as $t \to 0^{+}$,
\begin{equation*}
    |w'| < d^*(t) < \delta_0/10.
\end{equation*}
We now estimate
\begin{align}\label{rhocircgamma^{-1}}
    \rho \circ \gamma_t^{-1}(w_1,w') &= \frac{\operatorname{Re}w_1}{A(t)} - \frac{2p_2(t)\phi'(p_2(t)^2)}{A(t)}\operatorname{Re}w_2 - \phi(p_2(t)^2)  \nonumber \\
    & \ \ \ \ +\phi\left(\left|\frac{w_2}{A(t)} + p_2(t) + \frac{2p_2(t)\phi'(p_2(t)^2)}{A(t)}w_1\right|^2 + |w_3|^2 + \dots + |w_{n+1}|^2\right)\nonumber\\
    &\leq -\frac{d(t)}{A(t)}\left(\frac{1 - (1-\delta)^2}{(2-\delta)^2 + (1-\delta)^2}\right)
    + \frac{2p_2(t) \phi'(p_2(t)^2)}{A(t)}(1-\delta)d^*(t) - \phi(p_2(t)^2) \nonumber \\
    & \ \ \ \ + \phi\left(\left(\frac{|w_2|}{A(t)} + p_2(t) + \frac{2p_2(t)\phi'(p_2(t)^2)}{A(t)}|w_1|\right)^2 + |w_3|^2 + \dots + |w_{n+1}|^2\right)\nonumber\\
    &\leq -\frac{d(t)}{A(t)}\left(\frac{1 - (1-\delta)^2}{(2-\delta)^2 + (1-\delta)^2}\right)
    + \frac{2p_2(t) \phi'(p_2(t)^2)}{A(t)}(1-\delta)d^*(t) - \phi(p_2(t)^2) \nonumber\\
    & \,\, \, \  \ \ + 
    \phi\Bigg[\left((1-\delta)d^*(t)|z_2| + p_2(t) + \frac{4p_2(t)\phi'\left(p_2(t)^2\right)d(t)}{\delta^2}\right)^2  \nonumber\\
    & \hspace{58mm} +(1-\delta)^2d^*(t)^2\left(|z_3|^2 + \dots + |z_{n+1}|^2\right)\Bigg].
\end{align}
Since $(-d(t), d^*(t), 0, \dots, 0) \in b\gamma_t(\Omega)$,
\begin{equation}\label{rel in d^*}
    \frac{d(t)}{A(t)} + \frac{2p_2(t)\phi'(p_2(t)^2)}{A(t)}d^*(t) + \phi(p_2(t)^2) = \phi\left(\left[\frac{d^*(t)}{A(t)} + p_2(t) - \frac{2p_2(t)\phi'(p_2(t)^2)}{A(t)}d(t)\right]^2\right).
\end{equation}
From Lemma \ref{3.1}, $\lim_{t \to 0^+}p_2(t)/d^*(t) = 0$. Therefore
\begin{align}  
    \lim_{t \to 0^+}&\frac{\left((1-\delta)d^*(t)|z_2| + p_2(t) + \dfrac{4p_2(t)\phi'(p_2(t)^2)d(t)}{\delta^2}\right)^2 +  (1-\delta)^2d^*(t)^2(|z_3|^2 + \dots + |z_{n+1}|^2)}{\left(\frac{d^*(t)}{A(t)} + p_2(t) - \frac{2p_2(t)\phi'(p_2(t)^2)}{A(t)}d(t)\right)^2}\nonumber \\
    &= (1-\delta)^2|z'|^2 <(1-\delta)^2.
\end{align}
Since
\begin{equation}
        \lim_{r \to 0^{+}}\frac{\phi(rx)}{\phi(r)} = \begin{cases} 
      0, &\text{if } 0 < x < 1, \text{ and} \\
      \infty, &\text{if }  x > 1,
      \end{cases}
\end{equation}
we take $t_0(\delta, \epsilon) > 0$ sufficiently small such that
\begin{align*}
    &\phi\left(\left((1-\delta)d^*(t)|z_2| + p_2(t) + \frac{4p_2(t)\phi'(p_2(t)^2)d(t)}{\delta^2}\right)^2 +  (1-\delta)^2d^*(t)^2(|z_3|^2 + \dots + |z_{n+1}|^2)\right)\\
    & \ \ \ \ \ \ \ \ \ \ \ \ \ \ \ \ \leq \frac{1}{2}\frac{1 - (1-\delta)^2}{(2-\delta)^2 + (1-\delta)^2}\phi\left(\left[\frac{d^*(t)}{A(t)} + p_2(t) - \frac{2p_2(t)\phi'(p_2(t)^2)}{A(t)}d(t)\right]^2\right) \\
    & \ \ \ \ \ \ \ \ \ \ \ \ \ \ \ \ = \frac{1}{2}\frac{1 - (1-\delta)^2}{(2-\delta)^2 + (1-\delta)^2}\left( \frac{d(t)}{A(t)} + \frac{2p_2(t)\phi'(p_2(t)^2)}{A(t)}d^*(t) + \phi(p_2(t)^2)\right) \ \ \ \text{(by \eqref{rel in d^*})},
\end{align*}
for each $0 < t < t_{0}(\delta, \epsilon).$
From \eqref{rhocircgamma^{-1}}, we have
\begin{align*}
    \rho \circ \gamma_t^{-1}(w_1, w') &\leq -\frac{d(t)}{2A(t)} \frac{1 - (1-\delta)^2}{(2-\delta)^2 + (1-\delta)^2}\\ &\ \ \ \ + \frac{2p_2(t)\phi'(p_2(t)^2)}{A(t)}d^*(t)\left((1-\delta) + \frac{1}{2}\frac{1 - (1-\delta)^2}{(2-\delta)^2 + (1-\delta)^2} \right) \\
    & \ \ \ \ + \phi(p_2(t)^2)\left(-1 + \frac{1}{2}\frac{1 - (1-\delta)^2}{(2-\delta)^2 + (1-\delta)^2}\right).
\end{align*}
Using lemma \ref{3.1} again, we see that
$\phi(p_2(t)^2)/d(t) \to 0$ and $\phi'(p_2(t)^2)/d(t) \to 0$ as $t \to 0^{+}.$
Hence, we can take $t_0(\delta, \epsilon) > 0$ sufficiently small so that
\begin{equation}
    \rho \circ \gamma_t^{-1}(w_1, w') < 0,
\end{equation}
for each $0 < t < t_0(\delta, \epsilon)$. This implies $(w, w') \in D_t^\epsilon$ and hence,
\begin{equation}
    (1-\delta)\mathbb{D}\times B_n(0,1) \subset f \circ \Sigma\left(D_t^\epsilon\right),
\end{equation}for each $0 < t < t_0(\delta, \epsilon)$.

Conversely, assume $(u_1, u') \in f \circ \Sigma\left(D_t^\epsilon\right)$. Then, 
\begin{equation}
    (u_1, u') = f \circ \Sigma (w_1, w'),
\end{equation}
for some $(w_1, w') \in D_t^\epsilon$. 

Since $\phi$ is convex,
$\operatorname{Re}w_1 < 0$, and hence $u_1 \in \mathbb{D}$. So $(u_1, u') \in \mathbb{D} \times B_n(0, d_2^\epsilon(t)/d^*(t))$.
\end{proof}

\begin{lem}\label{I_1 2}
     Let $D \subset \mathbb{C}^{n+1}$ be as in \eqref{*}. Further, assume that $D$ is pseudoconvex and $q$ is an $(\alpha, N)$-cone type stream approaching $0$. Then
     \begin{align*}
         &\lim_{t \to 0^+} \frac{(2d(t))^2(d^{*}(t))^{2n} I_0^{\mathbb{D} \times B_n(0, 1)}(0)}{I_0^{D_t}((-d(t), 0))} = 1  \quad\text{and } \\
         &\lim_{t \to 0^+} \frac{(2d(t))^2(d^{*}(t))^{2n} I_j^{\mathbb{D} \times B_n(0, 1)}(0, (f \circ \Sigma)'(-d(t), 0) \xi(t))}{I_j^{D_t}((-d(t), 0), \xi(t))} = 1     
     \end{align*}
     for $j = 1, 2$, and
     $\xi(t) \in \mathbb{C}^{n + 1} \setminus \{0\}$. Recall that $D_t = \gamma_t\left(R_t^1 \circ T_t^1(D)\right)$.
\end{lem}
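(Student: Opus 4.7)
The plan is to combine Lemma~\ref{localisation lemma 4}, the biholomorphism $f\circ\Sigma$, and the two-sided inclusion of Lemma~\ref{ScalingLemma2} via the transformation formulae and the monotonicity of the extremal integrals. A direct computation gives $f\circ\Sigma(-d(t),0)=0$ and
\[
\bigl|\det J_{\mathbb{C}}(f\circ\Sigma)(-d(t),0)\bigr|^{2}=\frac{1}{(2d(t))^{2}(d^{*}(t))^{2n}},
\]
so the transformation formulae for the $I_{j}$'s turn the statement of the lemma into the claim that
\[
\lim_{t\to0^{+}}\frac{I_{j}^{f\circ\Sigma(D_{t}^{\epsilon})}(0,\eta(t))}{I_{j}^{\mathbb{D}\times B_{n}(0,1)}(0,\eta(t))}=1,\qquad \eta(t):=(f\circ\Sigma)'(-d(t),0)\,\xi(t),
\]
after Lemma~\ref{localisation lemma 4} has been used to replace $I_{j}^{D_{t}}$ by $I_{j}^{D_{t}^{\epsilon}}$ up to a factor $1+o(1)$. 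The $I_{0}$ case is the same with $\eta(t)$ suppressed.

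To control this ratio I sandwich $f\circ\Sigma(D_{t}^{\epsilon})$. From Lemma~\ref{ScalingLemma2}, for each $\delta,\epsilon>0$ and all sufficiently small $t>0$,
\[
(1-\delta)\mathbb{D}\times B_{n}(0,1)\ \subset\ f\circ\Sigma(D_{t}^{\epsilon})\ \subset\ \mathbb{D}\times B_{n}\!\bigl(0,r_{\epsilon}(t)\bigr),\qquad r_{\epsilon}(t):=\frac{d_{2}^{\epsilon}(t)}{d^{*}(t)},
\]
where $\limsup_{t\to0^{+}}r_{\epsilon}(t)\le(1+\epsilon)^{1/m}$ by Lemma~\ref{3.1}. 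Monotonicity of $I_{j}$ under subdomain inclusion, combined with the transformation formulae applied to the elementary biholomorphisms $(z_{1},z')\mapsto(z_{1}/(1-\delta),z')$ and $(z_{1},z')\mapsto(z_{1},z'/r_{\epsilon}(t))$, pins $I_{j}^{f\circ\Sigma(D_{t}^{\epsilon})}(0,\eta(t))$ between explicit multiples of $I_{j}^{\mathbb{D}\times B_{n}(0,1)}(0,\cdot)$ evaluated on mildly rescaled directions.

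Proposition~\ref{Fuchs} and Lemma~\ref{Formula for kernel} make $I_{j}^{\mathbb{D}\times B_{n}(0,1)}(0,\xi)$ an explicit rational function of $(|\xi_{1}|^{2},|\xi'|^{2})$, homogeneous in $\xi$ of degree $-2$ for $j=0,1$ and of degree $-4$ for $j=2$. A short computation shows that the ratios
\[
\frac{I_{j}^{\mathbb{D}\times B_{n}(0,1)}\!\bigl(0,(\eta_{1}(t)/(1-\delta),\eta'(t))\bigr)}{I_{j}^{\mathbb{D}\times B_{n}(0,1)}(0,\eta(t))},\qquad \frac{I_{j}^{\mathbb{D}\times B_{n}(0,1)}\!\bigl(0,(\eta_{1}(t),\eta'(t)/r_{\epsilon}(t))\bigr)}{I_{j}^{\mathbb{D}\times B_{n}(0,1)}(0,\eta(t))}
\]
are pinched between functions of $(\delta,r_{\epsilon}(t))$ alone that tend to $1$ as $\delta\to 0^{+}$ and $r_{\epsilon}(t)\to 1^{+}$, \emph{uniformly in the direction of }$\eta(t)$. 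Passing to $\liminf$ and $\limsup$ in $t$ for fixed $\epsilon,\delta$, then sending $\delta\to 0^{+}$ and finally $\epsilon\to 0^{+}$, yields the limit $1$.

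The main obstacle I anticipate is precisely this direction-uniform control: since $\xi(t)$, and hence $\eta(t)$, is allowed to rotate with $t$, one cannot absorb the anisotropic rescalings into a harmless $o(1)$ without a bound that is independent of the direction of $\eta(t)$. The explicit quadratic form underlying the Bergman metric on $\mathbb{D}\times B_{n}(0,1)$ settles $I_{0}$ and $I_{1}$ at once; for $I_{2}$ one additionally verifies that the second-order constraint is preserved cleanly under the linear biholomorphisms $(z_{1},z')\mapsto (z_{1}/(1-\delta),z')$ and $(z_{1},z')\mapsto (z_{1},z'/r_{\epsilon}(t))$, which is routine because admissible functions vanish to first order at $0$.
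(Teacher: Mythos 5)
Your proposal follows the paper's argument step for step: localize via Lemma~\ref{localisation lemma 4}, apply the transformation formula for $f\circ\Sigma$ (which produces exactly the Jacobian factor $(2d(t))^{2}(d^{*}(t))^{2n}$), sandwich $f\circ\Sigma(D_t^\epsilon)$ using Lemma~\ref{ScalingLemma2}, and pass to the limit with Lemma~\ref{3.1}. The only difference is that you explicitly track the anisotropic distortion of the direction vector under the auxiliary rescalings $(z_1,z')\mapsto(z_1/(1-\delta),z')$ and $(z_1,z')\mapsto(z_1,z'/r_\epsilon(t))$ and check direction-uniformity via the explicit formulas for $I_j^{\mathbb{D}\times B_n(0,1)}$; the paper's displayed scaling identities suppress this distortion, but the resulting pinch factors depend only on $\delta$ and $r_\epsilon(t)$ and tend to $1$, so both arguments reach the same conclusion.
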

\begin{proof}
 Let $t > 0$ be small, and let  $\xi(t) \in \mathbb{C}^{n + 1} \setminus \{0\}$. 
 For $j = 1,2$, we have
    \begin{equation}\label{24}
        I_j^{D_t^{\epsilon}}((-d(t), 0), \xi(t)) = {(2d(t))^2d^*(t)^{2n} I_j^{f \circ \Sigma (D_t^{\epsilon})}(0, (f \circ \Sigma)'(-d(t), 0)\xi(t))}.
    \end{equation}
    Let $\epsilon, \delta > 0$. From Lemma \ref{ScalingLemma2}, there exists $t_{0}(\delta, \epsilon) > 0$ such that
    \begin{align*}
        I_j^{(f\circ \Sigma(D_t^{\epsilon}))}(0, (f \circ \Sigma)'(-d(t),0)\xi(t)) &\geq I_j^{(1-\delta)\mathbb{D} \times B_n(0,1)}(0, (f \circ \Sigma)'(-d(t),0)\xi(t))\\
        &= (1 - \delta)^{2(n + 1)} I_j^{\mathbb{D} \times B_n(0,1)}(0, (f \circ \Sigma)'(-d(t),0)\xi(t)),
    \end{align*}
    which implies
    \begin{equation}\label{136}
        \frac{I_j^{\mathbb{D} \times B_n(0,1)}(0, (f \circ \Sigma)'(-d(t),0)\xi(t))}{I_j^{(f\circ \Sigma(D_t^{\epsilon}))}(0, (f \circ \Sigma)'(-d(t),0)\xi(t))} \leq \frac{1}{(1 - \delta)^{2(n + 1)}},
    \end{equation}
    for each $0 < t < t_0(\delta, \epsilon)$.

    Again using Lemma \ref{ScalingLemma2}, we have
    \begin{align*}
        I_j^{(f\circ \Sigma(D_t^{\epsilon}))}(0, (f \circ \Sigma)'(-d(t),0)\xi(t)) &\leq I_j^{(\mathbb{D} \times B_n(0, d_2^{\epsilon}(t)/d^{*}(t)))}{(0, (f \circ \Sigma)'(-d(t),0)\xi(t))}\\
        &= \left(\frac{d_2^{\epsilon}(t)}{d^{*}(t)}\right)^{2n}I_j^{(\mathbb{D} \times B_n(0,1))}{(0, (f \circ \Sigma)'(-d(t),0)\xi(t))},
    \end{align*}
    which implies
    \begin{equation}\label{138}
        \frac{I_j^{(\mathbb{D} \times B_n(0,1))}{(0, (f \circ \Sigma)'(-d(t),0)\xi(t))}}{I_j^{(f\circ \Sigma(D_t^{\epsilon}))}(0, (f \circ \Sigma)'(-d(t),0)\xi(t))} \geq \left(\frac{d^*(t)}{d_2^{\epsilon}(t)}\right)^{2n},
    \end{equation}
    for each $0 < t < t_0(\delta, \epsilon)$.

    Now 
    \begin{align}\label{139}
        &\frac{(2d(t))^2 d^*(t)^{2n}I_j^{\mathbb{D} \times B_n(0,1)}(0, (f \circ \Sigma)'(-d(t),0)\xi(t))}{{I_j^{D_t}((-d(t),0), \xi(t))}}\nonumber \\
        &\,\,\,\,\,\,\,\,\,=\frac{I_j^{\mathbb{D} \times B_n(0,1)}(0, (f \circ \Sigma)'(-d(t),0)\xi(t))}{{{I_j^{(f\circ \Sigma(D_t^{\epsilon}))}(0, (f \circ \Sigma)'(-d(t),0)\xi(t))}}} \cdot \frac{{{I_j^{(f\circ \Sigma(D_t^{\epsilon}))}(0, (f \circ \Sigma)'(-d(t),0)\xi(t))}}}{(2d(t))^{-2}d^*(t)^{-2n}I_j^{D_t}((-d(t),0), \xi(t))}\nonumber\\
        &\,\,\,\,\,\,\,\,\,=\frac{I_j^{\mathbb{D} \times B_n(0,1)}(0, (f \circ \Sigma)'(-d(t),0)\xi(t))}{{{I_j^{(f\circ \Sigma(D_t^{\epsilon}))}(0, (f \circ \Sigma)'(-d(t),0)\xi(t))}}} \cdot \frac{{{I_j^{D_t^{\epsilon}}((-d(t), 0), \xi(t))}}}{I_j^{D_t}((-d(t),0), \xi(t))} \, \, \, \, \, \, \text{     
        (from \eqref{24})}.
    \end{align}
    By using \eqref{136}, \eqref{139}, and Lemma \ref{localisation lemma 4}, we get
    \begin{equation}
        \limsup_{t \to 0^+}\frac{(2d(t))^2 d^*(t)^{2n}I_j^{\mathbb{D} \times B_n(0,1)}(0, (f \circ \Sigma)'(-d(t),0)\xi(t))}{{I_j^{D_t}((-d(t),0), \xi(t))}} \leq \frac{1}{(1 - \delta)^{2(n + 1)}} ,
    \end{equation}
    for each $\delta > 0$. Hence
    \begin{equation}\label{29}
        \limsup_{t \to 0^+}\frac{(2d(t))^2 d^*(t)^{2n}I_j^{\mathbb{D} \times B_n(0,1)}(0, (f \circ \Sigma)'(-d(t),0)\xi(t))}{{I_j^{D_t}((-d(t),0), \xi(t))}}  \leq {1}.
    \end{equation}
    By using \eqref{138}, \eqref{139}, Lemma \ref{3.1}, and Lemma \ref{localisation lemma 4}, we get
    \begin{equation*}
        \liminf_{t \to 0^{+}} \frac{(2d(t))^2 d^*(t)^{2n}I_j^{\mathbb{D} \times B_n(0,1)}(0, (f \circ \Sigma)'(-d(t),0)\xi(t))}{{I_j^{D_t}((-d(t),0), \xi(t))}}  \geq \frac{1}{(1 + \epsilon)^{2n/m}},
    \end{equation*}
    for $\epsilon > 0$. Hence
    \begin{equation}\label{31}
        \liminf_{t \to 0^{+}} \frac{(2d(t))^2 d^*(t)^{2n}I_j^{\mathbb{D} \times B_n(0,1)}(0, (f \circ \Sigma)'(-d(t),0)\xi(t))}{{I_j^{D_t}((-d(t),0), \xi(t))}}  \geq 1.
    \end{equation}
    From \eqref{29} and \eqref{31}, we have
    \begin{equation}
         \lim_{t \to 0^+} \frac{(2d(t))^2 d^*(t)^{2n}I_j^{\mathbb{D} \times B_n(0,1)}(0, (f \circ \Sigma)'(-d(t),0)\xi(t))}{{I_j^{D_t}((-d(t),0), \xi(t))}} = 1.
    \end{equation}
   Similarly, we have
   \begin{equation}
    \lim_{t \to 0^+} \frac{(2d(t))^2(d^{*}(t))^{2n} I_0^{\mathbb{D} \times B_n(0, 1)}(0)}{I_0^{D_t}(-d(t), 0)} = 1.   
   \end{equation}
\end{proof}
We now present the proof of the Theorem \ref{1.3}.

\begin{proof}[Proof of Theorem \ref{1.3}]
Let $\xi \in \mathbb{C}^{n + 1} \setminus \{0\}$. We use $\xi_{N,t}$ and $\xi_{T,t}$ to denote the complex normal and complex tangential components of $\xi$ with respect to $\pi(q(t))$, i.e., $\xi = \xi_{N,t} + \xi_{T,t}$, where $\xi_{T,t} \in T_{\pi(q(t))}^{\mathbb{C}}(bD)$ and $\xi_{N,t} \perp T_{\pi(q(t))}^{\mathbb{C}}(bD)$.
We then have
\begin{equation*}
     \xi_{N, t} = \langle R^2_t \circ R^1_t(\xi), e_1\rangle (R_t^2 \circ R_t^1)^{-1
     }(e_1), \, \text{and } \xi_{T,t} = \sum_{j = 2}^{n + 1}\langle R^2_t \circ R^1_t(\xi), e_j\rangle (R_t^2 \circ R_t^1)^{-1
     }(e_j).
\end{equation*}
Then $\xi = \xi_{N, t} + \xi_{T,t},
|\xi_{N, t}| = |\langle R^2_t \circ R^1_t(\xi), e_1 \rangle|$, and 
$|\xi_{T,t}| = \sqrt{\sum_{j = 2}^{n + 1}|\langle R^2_t \circ R^1_t(\xi), e_j\rangle|^2}$.
Here
 \begin{align*}
 \kappa_D(q(t)) &= \kappa_{D_t}(-d(t), 0),\\
 B_D(q(t); \xi) &= B_{D_t}((-d(t), 0);R_t^2 \circ R^1_t(\xi)), \, \text{and} \\
 H_D(q(t); \xi) &= H_{D_t}((-d(t), 0); R_t^2 \circ R_t^1(\xi)).
 \end{align*}
By using the Proposition \ref{Fuchs}, and Lemma \ref{I_1 2}, we get
\begin{align}
    \lim_{t \to 0^+} \frac{\kappa_D(q(t)) (2d(t))^2(d^{*}(t))^{2n}}{K_{\mathbb{D} \times B_n(0, 1)}(0) 
 }  &= 1\\
 \lim_{t \to 0^{+}} \frac{B_D(q(t); \xi)}{B_{\mathbb{D} \times B_n(0, 1)}(0; (f \circ \Sigma)'(-d(t), 0) R_t^2 \circ R_t^1(\xi))} &= 1, \, \text{and}\\
 \lim_{t \to 0^{+}} \frac{H_D(q(t); \xi) - 2}{H_{\mathbb{D} \times B_n(0, 1)}(0; (f \circ \Sigma)'(-d(t), 0) R_t^2 \circ R_t^1(\xi)) - 2} &= 1.
\end{align}
Since,
\begin{equation*}
    (f \circ \Sigma)'(-d(t), 0) R_t^2 \circ R_t^1(\xi) = \langle R_t^2 \circ R_t^1 (\xi), e_1\rangle \frac{e_1}{2d(t)} + \sum_{j = 2}^{n + 1}\langle R_t^2 \circ R_t^1 (\xi), e_j\rangle \frac{e_j}{d^{*}(t)}.
\end{equation*}
We get the theorem by using Lemma \ref{Formula for kernel}.
\end{proof}
\section{An Example}\label{counterexample}
Some of our results (for $n = 1$) are stated in Kim and Lee \cite{Sunhong 2002} but their proofs rely crucially on \cite{Sunhong 2002}*{Proposition 2} which is not true.
In this section, we provide a concrete example
to demonstrate that.

Suppose $D \subset \mathbb{C}^2$ and $U \subset \mathbb{C}^2$ are defined as in \eqref{*} with $\phi(x) = e^{-1/x}$.
Let $q(t) = (-t, 0)$ for all sufficiently small $t > 0$, define
\begin{equation*}
    \Omega_t = \left\{(w,z) \in U: \rho(w, z) < 0, \operatorname{Re}w > -t^{1/2}\right\} \subset D \cap U.
\end{equation*}
For small $t > 0$, choose $d_1(t)>0$ and $d^*(t) > 0$ such that $\left(-{t}^{1/2}, d_1(t)\right), \left(-t, d^*(t)\right) \in bD \cap U$, which implies
\begin{equation*}
     -{t^{1/2}} + \operatorname{exp}\left(\frac{-1}{d_1^2(t)}\right) = 0 \text{  and  }  -t + \operatorname{exp}\left(\frac{-1}{(d^*(t))^2}\right) = 0.
\end{equation*}
Let $\Sigma(w, z) = \left(w/t, z/d^*(t)\right)$. Using the above two equations, we get
\begin{equation}
    \frac{d_1(t)}{d^*(t)} = \sqrt{2},
\end{equation}
for small $t > 0$. 

In \cite{Sunhong 2002}*{Proposition 2},
it is claimed that the limit domain 
$\Sigma\left(\Omega_t\right)$ is $\mathbb{H} \times \mathbb{D}$. But we can easily see that, $\Sigma\left(-t^{1/2}, d_1(t)\right) = \left(-1/{t^{1/2}}, \sqrt{2}\right)$ which implies the limit domain of $\Sigma(\Omega_t)$ is not $\mathbb{H} \times \mathbb{D}$. 

The source of this error in Kim and Lee \cite{Sunhong 2002} is due to the following incorrect claim (see \cite{Sunhong 2002}*{equation (17)}).
\begin{equation}\label{voilate eq}
    \lim_{t \to 0^+} \frac{\phi\left(\left|d^*(t)u_2\right|^2\right)}{\sqrt{\phi\left(d^*(t)^2\right)}} = \begin{cases}
     0, & \text{if }|u_2|\leq 1 , \text{ and}\\
     \infty, & \text{if }|u_2| > 1,
     \end{cases}
\end{equation}
which is not true in general. Again let $\phi(x) = e^{-1/x}$, and $d^*(t)$ as defined above. Now
\begin{align*}
        \frac{\phi\left(|d^*(t)u_2|^2\right)}{\sqrt{\phi\left(d^*(t)^2\right)}} &= \frac{\operatorname{exp}\left({-1}/{|d^*(t)u_2|^2}\right)}{\operatorname{exp}\left({-1}/{2|d^*(t)|^2}\right)}\\
        &= \operatorname{exp}\left(\frac{-1}{|d^*(t)|^2}\frac{(2-|u_2|^2)}{2|u_2|^2}\right).
\end{align*}
As $t \rightarrow 0^+$, we get $d^*(t) \rightarrow 0$. Therefore,
\begin{equation}
     \lim_{t \to 0^+}\frac{\phi\left(|d^*(t)u_2|^2\right)}{\sqrt{\phi\left(d^*(t)^2\right)}} = \begin{cases}
     0, & \text{if }|u_2|< \sqrt{2} , \text{ and}\\
     \infty, & \text{if }|u_2|> \sqrt{2},
     \end{cases}
\end{equation}
which contradicts \eqref{voilate eq}.
\section*{Acknowledgements}
I would like to thank my advisor Sivaguru Ravisankar for several fruitful discussions, suggestions and comments.

\def\MR#1{\relax\ifhmode\unskip\spacefactor3000 \space\fi%
  \href{http://www.ams.org/mathscinet-getitem?mr=#1}{MR#1}}
 \begin{bibdiv}
\begin{biblist}
\bib{Bergman 1970}{book}{
   author={Bergman, Stefan},
   title={The kernel function and conformal mapping},
   series={Mathematical Surveys, No. V},
   edition={Second, revised edition},
   publisher={American Mathematical Society, Providence, R.I.},
   date={1970},
   pages={x+257},
   review={\MR{0507701}},
}

\bib{Stefan 1933}{article}{
   author={Bergmann, Stefan},
   title={\"{U}ber die Kernfunktion eines Bereiches und ihr Verhalten am Rande.
   I},
   language={German},
   journal={J. Reine Angew. Math.},
   volume={169},
   date={1933},
   pages={1--42},
   issn={0075-4102},
   review={\MR{1581372}},
   doi={10.1515/crll.1933.169.1},
}
\bib{Bharali 2010}{article}{
   author={Bharali, Gautam},
   title={On the growth of the Bergman kernel near an infinite-type point},
   journal={Math. Ann.},
   volume={347},
   date={2010},
   number={1},
   pages={1--13},
   issn={0025-5831},
   review={\MR{2593280}},
   doi={10.1007/s00208-009-0421-x},
}
\bib{Bharali 2020}{article}{
   author={Bharali, Gautam},
   title={On the growth of the Bergman metric near a point of infinite type},
   journal={J. Geom. Anal.},
   volume={30},
   date={2020},
   number={2},
   pages={1238--1258},
   issn={1050-6926},
   review={\MR{4081311}},
   doi={10.1007/s12220-019-00342-9},
}

\bib{Yu 1995}{article}{
   author={Boas, Harold P.},
   author={Straube, Emil J.},
   author={Yu, Ji Ye},
   title={Boundary limits of the Bergman kernel and metric},
   journal={Michigan Math. J.},
   volume={42},
   date={1995},
   number={3},
   pages={449--461},
   issn={0026-2285},
   review={\MR{1357618}},
   doi={10.1307/mmj/1029005306},
}
\bib{Catlin}{article}{
   author={Catlin, David W.},
   title={Estimates of invariant metrics on pseudoconvex domains of
   dimension two},
   journal={Math. Z.},
   volume={200},
   date={1989},
   number={3},
   pages={429--466},
   issn={0025-5874},
   review={\MR{978601}},
   doi={10.1007/BF01215657},
}

\bib{D'Angelo 1982}{article}{
   author={D'Angelo, John P.},
   title={Real hypersurfaces, orders of contact, and applications},
   journal={Ann. of Math. (2)},
   volume={115},
   date={1982},
   number={3},
   pages={615--637},
   issn={0003-486X},
   review={\MR{657241}},
   doi={10.2307/2007015},
}

\bib{Diederich 1970}{article}{
   author={Diederich, Klas},
   title={Das Randverhalten der Bergmanschen Kernfunktion und Metrik in
   streng pseudo-konvexen Gebieten},
   language={German},
   journal={Math. Ann.},
   volume={187},
   date={1970},
   pages={9--36},
   issn={0025-5831},
   review={\MR{262543}},
   doi={10.1007/BF01368157},
}
\bib{Diederich 1973}{article}{
   author={Diederich, Klas},
   title={\"{U}ber die 1. und 2. Ableitungen der Bergmanschen Kernfunktion und
   ihr Randverhalten},
   language={German},
   journal={Math. Ann.},
   volume={203},
   date={1973},
   pages={129--170},
   issn={0025-5831},
   review={\MR{328130}},
   doi={10.1007/BF01431441},
}

\bib{Diedrich 1993}{article}{
   author={Diederich, K.},
   author={Herbort, G.},
   title={Geometric and analytic boundary invariants on pseudoconvex
   domains. Comparison results},
   journal={J. Geom. Anal.},
   volume={3},
   date={1993},
   number={3},
   pages={237--267},
   issn={1050-6926},
   review={\MR{1225297}},
   doi={10.1007/BF02921392},
}
\bib{Diedrich 1994}{article}{
   author={Diederich, Klas},
   author={Herbort, Gregor},
   title={Pseudoconvex domains of semiregular type},
   conference={
      title={Contributions to complex analysis and analytic geometry},
   },
   book={
      series={Aspects Math., E26},
      publisher={Friedr. Vieweg, Braunschweig},
   },
   date={1994},
   pages={127--161},
   review={\MR{1319347}},
}
\bib{Fefferman 1974}{article}{
   author={Fefferman, Charles},
   title={The Bergman kernel and biholomorphic mappings of pseudoconvex
   domains},
   journal={Invent. Math.},
   volume={26},
   date={1974},
   pages={1--65},
   issn={0020-9910},
   review={\MR{350069}},
   doi={10.1007/BF01406845},
}
\bib{Herbort 1992}{article}{
   author={Herbort, Gregor},
   title={Invariant metrics and peak functions on pseudoconvex domains of
   homogeneous finite diagonal type},
   journal={Math. Z.},
   volume={209},
   date={1992},
   number={2},
   pages={223--243},
   issn={0025-5874},
   review={\MR{1147815}},
   doi={10.1007/BF02570831},
}
\bib{Herbort 1993}{article}{
   author={Herbort, Gregor},
   title={On the invariant differential metrics near pseudoconvex boundary
   points where the Levi form has corank one},
   journal={Nagoya Math. J.},
   volume={130},
   date={1993},
   pages={25--54},
   issn={0027-7630},
   review={\MR{1223728}},
   doi={10.1017/S0027763000004414},
}
\bib{Hormander 1965}{article}{
   author={H\"{o}rmander, Lars},
   title={$L^{2}$ estimates and existence theorems for the $\bar \partial
   $ operator},
   journal={Acta Math.},
   volume={113},
   date={1965},
   pages={89--152},
   issn={0001-5962},
   review={\MR{179443}},
   doi={10.1007/BF02391775},
}		
\bib{Hörmander}{book}{
   author={H\"{o}rmander, Lars},
   title={An introduction to complex analysis in several variables},
   series={North-Holland Mathematical Library},
   volume={7},
   edition={3},
   publisher={North-Holland Publishing Co., Amsterdam},
   date={1990},
   pages={xii+254},
   isbn={0-444-88446-7},
   review={\MR{1045639}},
}

\bib{Jarnicki}{book}{
   author={Jarnicki, Marek},
   author={Pflug, Peter},
   title={Invariant distances and metrics in complex analysis},
   series={De Gruyter Expositions in Mathematics},
   volume={9},
   publisher={Walter de Gruyter \& Co., Berlin},
   date={1993},
   pages={xii+408},
   isbn={3-11-013251-6},
   review={\MR{1242120}},
   doi={10.1515/9783110870312},
}

\bib{Kamimoto}
 {article} {
 title={The asymptotic behavior of the Bergman kernel on pseudoconvex model domains}, 
      author={Joe Kamimoto},
      year={2023},
      note={\href{https://doi.org/10.48550/arXiv.2308.08249}{arXiv:2308.08249}}
}

\bib{Sunhong 2002}{article}{
   author={Kim, Kang-Tae},
   author={Lee, Sunhong},
   title={Asymptotic behavior of the Bergman kernel and associated
   invariants in certain infinite type pseudoconvex domains},
   journal={Forum Math.},
   volume={14},
   date={2002},
   number={5},
   pages={775--795},
   issn={0933-7741},
   review={\MR{1924777}},
   doi={10.1515/form.2002.033},
}

\bib{Klembeck 1978}{article}{
   author={Klembeck, Paul F.},
   title={K\"{a}hler metrics of negative curvature, the Bergmann metric near the
   boundary, and the Kobayashi metric on smooth bounded strictly
   pseudoconvex sets},
   journal={Indiana Univ. Math. J.},
   volume={27},
   date={1978},
   number={2},
   pages={275--282},
   issn={0022-2518},
   review={\MR{463506}},
   doi={10.1512/iumj.1978.27.27020},
}
\bib{Krantz book}{book}{
   author={Krantz, Steven G.},
   title={Function theory of several complex variables},
   note={Reprint of the 1992 edition},
   publisher={AMS Chelsea Publishing, Providence, RI},
   date={2001},
   pages={xvi+564},
   isbn={0-8218-2724-3},
   review={\MR{1846625}},
   doi={10.1090/chel/340},
}
	
\bib{Sunhong 2001}{article}{
   author={Lee, Sunhong},
   title={Asymptotic behavior of the Kobayashi metric on certain
   infinite-type pseudoconvex domains in ${\bf C}^2$},
   journal={J. Math. Anal. Appl.},
   volume={256},
   date={2001},
   number={1},
   pages={190--215},
   issn={0022-247X},
   review={\MR{1820076}},
   doi={10.1006/jmaa.2000.7307},
}
\bib{McNeal 1989}{article}{
   author={McNeal, Jeffery D.},
   title={Boundary behavior of the Bergman kernel function in ${\bf C}^2$},
   journal={Duke Math. J.},
   volume={58},
   date={1989},
   number={2},
   pages={499--512},
   issn={0012-7094},
   review={\MR{1016431}},
   doi={10.1215/S0012-7094-89-05822-5},
}
\bib{McNeal 1992}{article}{
   author={McNeal, Jeffery D.},
   title={Lower bounds on the Bergman metric near a point of finite type},
   journal={Ann. of Math. (2)},
   volume={136},
   date={1992},
   number={2},
   pages={339--360},
   issn={0003-486X},
   review={\MR{1185122}},
   doi={10.2307/2946608},
}
\bib{McNeal 1994}{article}{
   author={McNeal, Jeffery D.},
   title={Estimates on the Bergman kernels of convex domains},
   journal={Adv. Math.},
   volume={109},
   date={1994},
   number={1},
   pages={108--139},
   issn={0001-8708},
   review={\MR{1302759}},
   doi={10.1006/aima.1994.1082},
}
\bib{Nagel}{article}{
   author={Nagel, A.},
   author={Rosay, J.-P.},
   author={Stein, E. M.},
   author={Wainger, S.},
   title={Estimates for the Bergman and Szeg\H{o} kernels in ${\bf C}^2$},
   journal={Ann. of Math. (2)},
   volume={129},
   date={1989},
   number={1},
   pages={113--149},
   issn={0003-486X},
   review={\MR{979602}},
   doi={10.2307/1971487},
}

\end{biblist}
\end{bibdiv}
\end{document}